\documentclass[a4paper,12pt]{article}

\usepackage[english]{babel}

\usepackage[utf8]{inputenc}
\usepackage{amsmath}
\usepackage{amsthm}
\usepackage{amssymb}
\usepackage{mathtools}
\usepackage{mathrsfs}
\usepackage{xcolor}
\usepackage{tikz}
\usepackage{pgf}

\makeatletter\@ifpackageloaded{underscore}{}{\usepackage[strings]{underscore}}\makeatother
\newtheorem{thm}{Theorem}[section]
\newtheorem{lem}[thm]{Lemma}
\newtheorem{prop}[thm]{Proposition}
\newtheorem{cor}[thm]{Corollary}

\newtheorem{rem}[thm]{Remark}

\definecolor{axeZ}{RGB}{20,81,204}
\definecolor{axeY}{RGB}{212,43,11}
\definecolor{axeX}{rgb}{0.33, 0.42, 0.18}
\definecolor{mygreen}{rgb}{0.20  0.43 0.22} 
\definecolor{mypurple}{rgb}{0.37 0 0.51}  

\definecolor{axisX}{rgb}{0.281, 0.722, 0.216}
\definecolor{axisY}{rgb}{0.613, 0.125, 0.961}
\definecolor{axisZ}{rgb}{0.785, 0.527, 0.047}
\definecolor{axisT}{rgb}{0.082, 0.402, 0.539}

\newcommand\perm[2]{\ensuremath{({\color{axeY}#1}, {\color{axeZ}#2})}}
\newcommand\bsig{{\ensuremath{\boldsymbol{\sigma}}}}

\newcommand\cT{{\ensuremath{\mathcal{T}}}}

\newcommand\bpi{{\boldsymbol{\pi}}}

\newcommand\treea{\ensuremath{\perm{21}{12}}}
\newcommand{\treeb}{231}
\newcommand{\treeset}{S^{d-1}_n\left(\treea,\treeb \right)}

\newcommand{\bfdir}{\textbf{dir}}

\usepackage{listings}
\usepackage{placeins}

\usepackage{color}

\definecolor{deepblue}{rgb}{0,0,0.5}
\definecolor{deepred}{rgb}{0.6,0,0}
\definecolor{deepgreen}{rgb}{0,0.5,0}
\DeclareFixedFont{\ttb}{T1}{txtt}{bx}{n}{9} 
\DeclareFixedFont{\ttm}{T1}{txtt}{m}{n}{9}  
\newcommand\pythonstyle{\lstset{
language=Python,
basicstyle=\ttm,
morekeywords={self},              
keywordstyle=\ttb\color{deepblue},
emph={MyClass,__init__},          
emphstyle=\ttb\color{deepred},    
stringstyle=\color{deepgreen},
frame=tb,                         
showstringspaces=false
}}
\lstnewenvironment{python}[1][]
{
\pythonstyle
\lstset{#1}
}
{}

\newcommand\pythoninline[1]{{\pythonstyle\lstinline!#1!}}

\usepackage[letterpaper,top=2cm,bottom=2cm,left=2.75cm,right=2.75cm,marginparwidth=1.75cm]{geometry}
\usepackage{kpfonts}

\newcommand{\minipdf}[2]{\begin{minipage}{#1\textwidth}\center{\resizebox{0.9\textwidth}{!}{\includegraphics{#2}}}\end{minipage}}
\usepackage{graphicx}
\usepackage[colorlinks=true, allcolors=blue]{hyperref}
\usepackage{algpseudocode}
\usepackage{algorithm}

\newtheorem{theorem}{Theorem}
\newtheorem{example}{Example}

\newtheorem{definition}[theorem]{Definition}

\title{Max-tree for $d$-permutations and pattern avoidance}
\author{Thomas Muller}

\begin{document}
\sloppy
\maketitle

\begin{abstract}

    Higher dimensional permutations are tuples of $d-1$ permutations that can be identified with a point set in a $d-$dimensional grid. In N. Bonichon and P.-J. Morel, {\it J. Integer Sequences} 25 (2022), several conjectures regarding the enumeration of pattern avoiding $d-$permutations were stated.

    In this paper, we consider a mapping from $d-$permutations to $2^{d-1}-$ary trees that naturally generalizes the classical max-tree construction for permutations. We then show that, when restricted to $d-$permutations avoiding $\treea$ and $\treeb$, this mapping yields a bijection with d-ary trees. This result resolves one of the conjectures of Bonichon and Morel.
    

\end{abstract}

\tableofcontents

\section{Introduction}

Permutations are bijections from the set $[n]=\{1,\ldots,n\} $ to itself that represent rearrangements of distinguishable elements. They are central objects in combinatorics that can be studied from many different perspectives. In particular, the notion of pattern avoidance in permutations has led to numerous connections with other combinatorial structures (see for instance \cite{Kitaev2011PatternsIP} and \cite{Bona}). One celebrated example of such a connection is the enumeration of permutations avoiding a single pattern of size three which is given by the Catalan numbers \cite{knuth1973art}. 

In this work, we focus on a classical mapping from permutations to binary trees  that we recall here: the \emph{max-tree} of a permutation. This tree is obtained through a recursive decomposition of the permutation around its maximal element. A permutation $\pi$ is viewed as a set of points $(i,\pi(i))$ in the plane and the root of the tree is defined as the point with maximal value. Removing this point partitions the permutation into two subpermutations: the points lying to its left and those lying to its right. Then, applying the same construction recursively to these two subpermutations yields the left and right subtrees of the root, respectively.

\begin{figure}[!htb]
        \center{\minipdf{0.48}{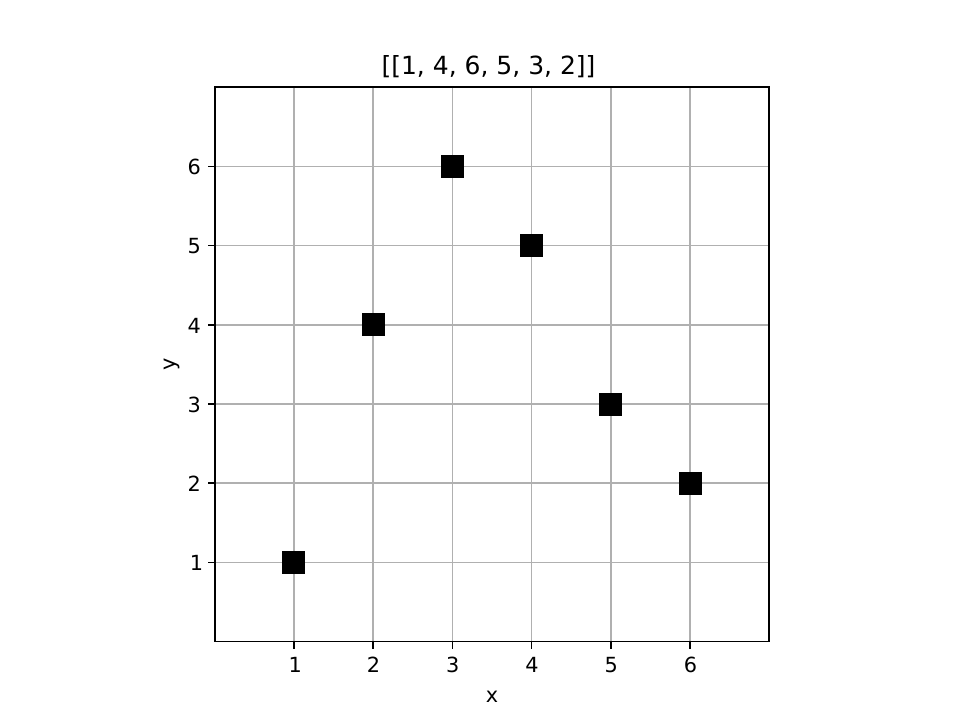} 	\minipdf{0.30}{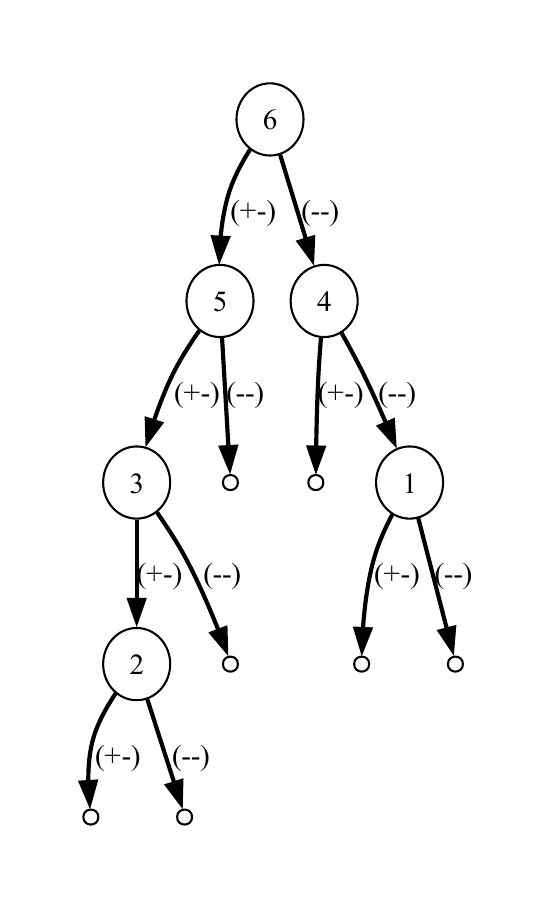}}
        \caption{A permutation and its max-tree}
        \label{fig:ex-tree-2d}
    \end{figure}

It is well-known that the max-tree construction induces a bijection between $231$-avoiding permutations and binary trees. Moreover, considering the dual construction, in which the minimal point is used instead of the maximal one, yields a second binary tree called the \emph{min-tree}. The pairs formed by the min-tree and the max-tree are \emph{twin binary trees}, and it is also known that the set of such pairs is in bijection with Baxter permutations \cite{DULUCQ199691}.

In this article, we generalize the notion of max-tree to the higher-dimensional setting of $d-$permutations. A \emph{$d-$permutation} is a tuple of $d-1$ permutations that can be geometrically represented as a set of points in a $d-$dimensional grid. While, in the two-dimensional case, the recursive decomposition of the max-tree is based on the left and right regions determined by a point, in the higher-dimensional setting it relies on the hyperoctants lying below a point with respect to the last axis of the grid. Consequently, the associated recursive structure is no longer a binary tree but a $2^{d-1}-$ary tree. 

Moreover, if one considers the sequence of points of a $d-$permutation $\bpi$ ordered by decreasing last coordinate and then projects these points onto the first $d-1$ coordinates, the $d-1$ dimensional hyperoctree (also called $2^{d-1}-$tree) \cite{treed} associated with this set of points corresponds to the max-tree of the $d-$permutation $\bpi$. The max-tree of a $3-$permutation thus corresponds to the quadtree \cite{quadtrees} of a set of two-dimensional points, while the max-tree of a $4-$permutations corresponds to the octree \cite{octrees} of a set of three-dimensional points.

Additionally, as in the two dimensional case, the mapping from $d-$permutations to $2^{d-1}-$ary tree defined by the max-tree is surjective. This leads to the definition of certain \emph{admissible classes} of $d-$permutations, for which this mapping becomes bijective. The definition of these classes rely on total orderings of the hyperoctants lying below a point of a $d-$permutation.

The second contribution of this paper focuses on the specialization of this mapping to pattern avoiding $d-$permutations. Following the notion of pattern avoidance considered in \cite{bonichon2022baxter}, we show that the set $\treeset$ of $d-$permutations avoiding $\treea$ and $\treeb$, forms a subset of one of the admissible classes of $d-$permutations, for which the associated max-trees correspond to $d-$ary trees. This yields a bijection between $\treeset$ and the set of $d-$ary trees with $n$ internal nodes.

This result is of twofold interest. First, it naturally generalizes the classical bijection between $231-$avoiding permutations and binary trees to arbitrary dimension $d$. Second, it provides an enumeration of the $d-$permutations in $\treeset$: 
\begin{equation}
        |\treeset|= \frac{1}{dn+1} \binom{dn+1}{n}\;.
    \end{equation}
This answers a conjecture originally formulated in \cite{bonichon2022baxter} and extended in \cite{muller2025study}. It is also worth mentioning that this bijection adds to other bijective results involving pattern-avoiding $d$-permutations and combinatorial structures such as triangle solitaire \cite{schabanel20253} and higher-dimensional floorplans \cite{bonichon2025}. 

A third interest of this bijection lies in its application to the random generation of $d-$permutations in $\treeset$. By generating uniformly a large random $d-$ary tree (using the encoding by Lukasiewicz words and the cycle lemma)  and by applying the mapping from trees to permutations of this bijection, one can generate large uniform random $d-$permutations in $\treeset$ as in Figure \ref{fig:random_231}.

\begin{figure}[!htb]
        \center{ \minipdf{0.40}{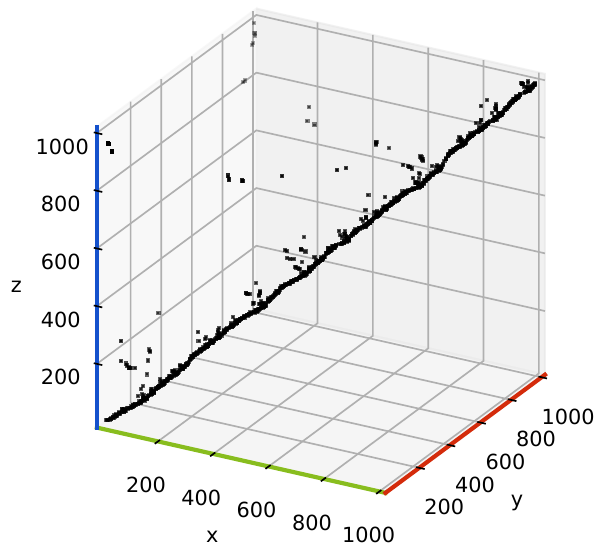} \minipdf{0.40}{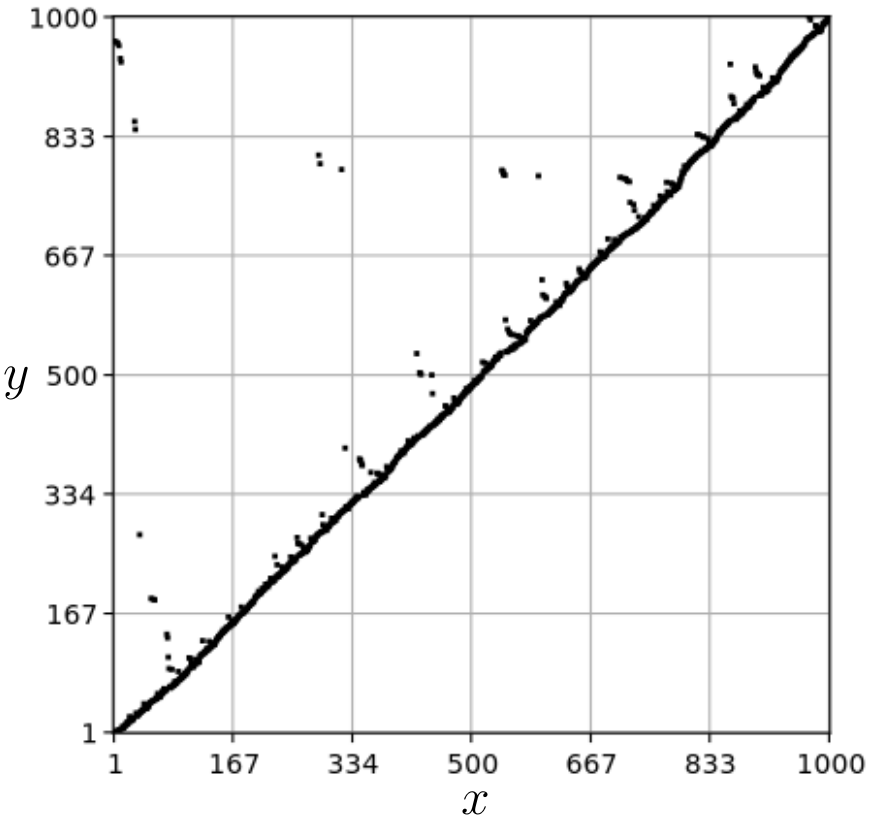} \minipdf{0.40}{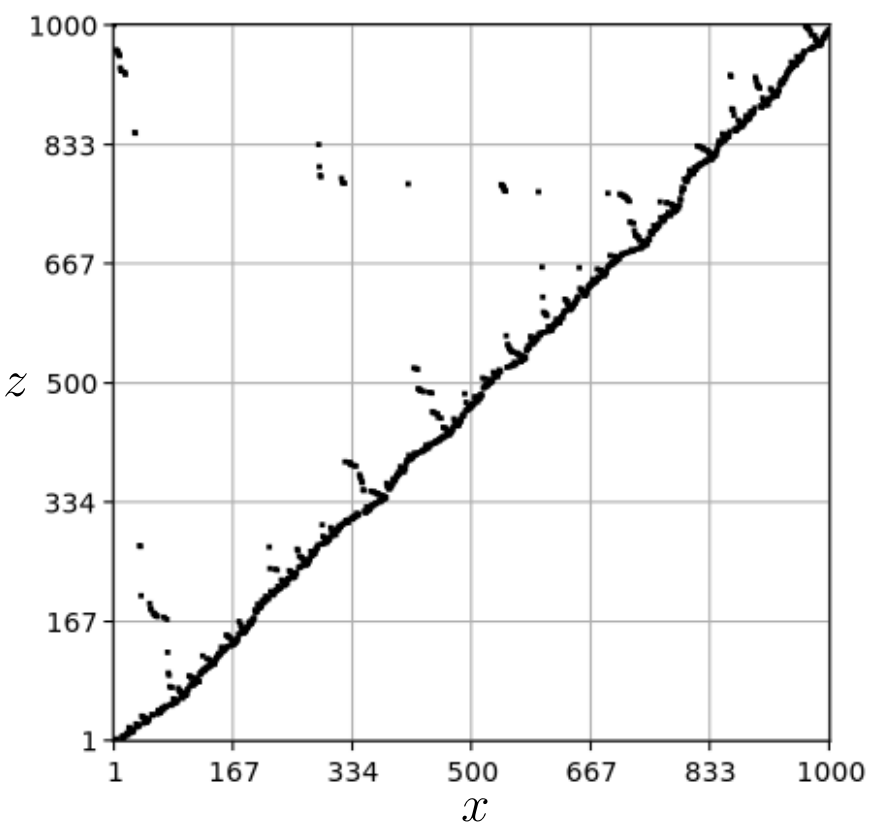} \minipdf{0.40}{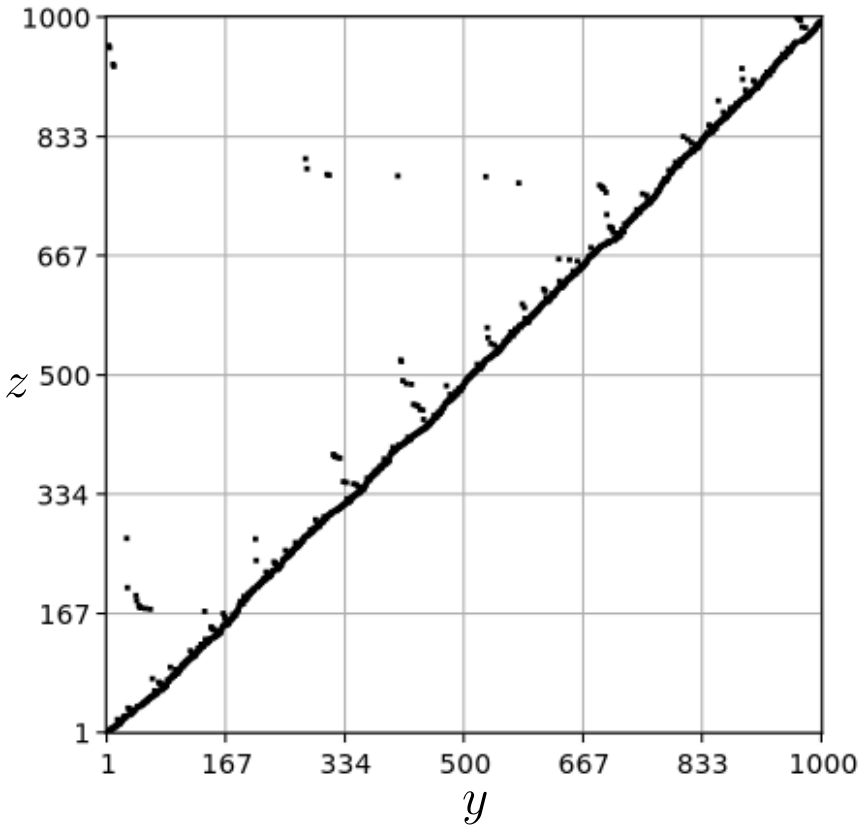}}
        \caption{A random $3-$permutation of size 1000 in $S^{2}_{n}(\treea,\treeb)$.}
        \label{fig:random_231}
    \end{figure}

This paper is organized as follows. In Section \ref{sec:section1}, we first recall basic notions on $d$-permutations and $d$-ary trees. We then introduce the max-tree associated with a $d$-permutation, together with admissible classes of $d$-permutations under this max-tree. We show that, restricted to these classes, the max-tree construction yields a bijection with $2^{d-1}$-ary trees. In Section \ref{sec:section2}, we first recall the notion of pattern avoidance used in this work and establish the bijection between $\treeset$ and the set of $d$-ary trees. We then conclude with several open problems related to this work in Section \ref{sec:Conclusion}.

\section{Prerequisites and notations}
\label{sec:intr}


In this subection, we recall some notions on $d-$permutations and $d-$ary trees that will be used in Section \ref{sec:section1} to introduce the mappings between $d-$permutations and $2^{d-1}-$ary trees. Note that in this section we do not consider the notion of pattern avoidance in $d-$permutations. This notion will be presented and used in Section \ref{sec:section2}.

\subsection*{$d-$permutations}

A $d-$permutation of size $n$ is a tuple of a $d-1$ permutations of size $n$.  We denote by $S^{d-1}_n$ the set of $d-$permutations of size $n$. Given a $d-$permutation $\bpi=(\pi_1, \ldots, \pi_{d-1})$, we also denote by $\pi_i$ the $i^{th}$ permutation in the tuple and by $\pi_i(j)$ the $j^{th}$ element of this permutation. The \emph{matrix representation} of $\bpi$ is the $d\times n$ matrix $$\bar{\bpi}= \begin{pmatrix}1 & 2 & \ldots & n-1 & n \\ \pi_1(1)& \pi_1(2) &\ldots& \pi_1(n-1)& \pi_1(n) \\ &  &\vdots& & \\ \pi_{d-1}(1)& \pi_{d-1}(2) &\ldots& \pi_{d-1}(n-1)& \pi_{d-1}(n) \end{pmatrix} \;.$$

The $d-$dimensional diagram of $\bpi$, denoted $\Pi(\bpi)$, is the point set $$\Pi(\bpi) := \{ (i,\pi_1(i), \ldots, \pi_{d-1}(i)), i \in [n] \} \;. $$

This point set is obtained by considering that each column in $\bar{\bpi}$ correspond to the position vector of a point in a $d-$dimensional grid. In the following, we consider that the coordinates of this point set are given by $x_0, \ldots ,x_{d-1}$. In the two and three dimensional case, we will also often denote by $x,y$ and $x,y,z$ those coordinates. Moreover, we will often abuse notation and denote by $\bpi(p)$ the position vector of the point $p$ in $\Pi(\bpi)$ and denote by $\pi_i(p)$ the coordinate $i$ of this position vector. 

\begin{figure}[!htb]
        \center{\minipdf{0.49}{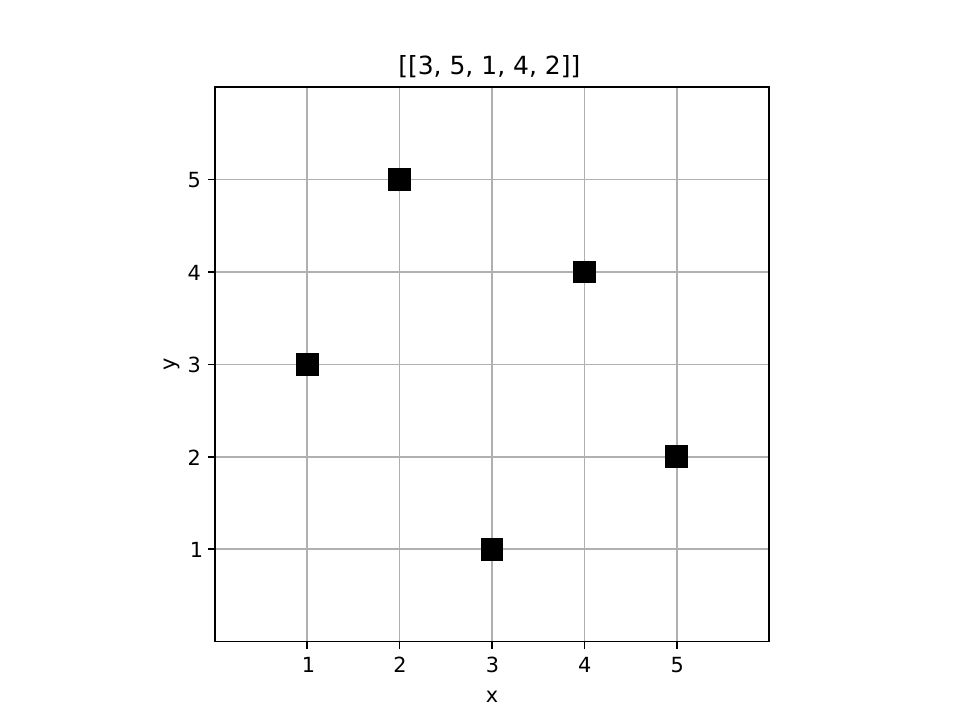} \minipdf{0.49}{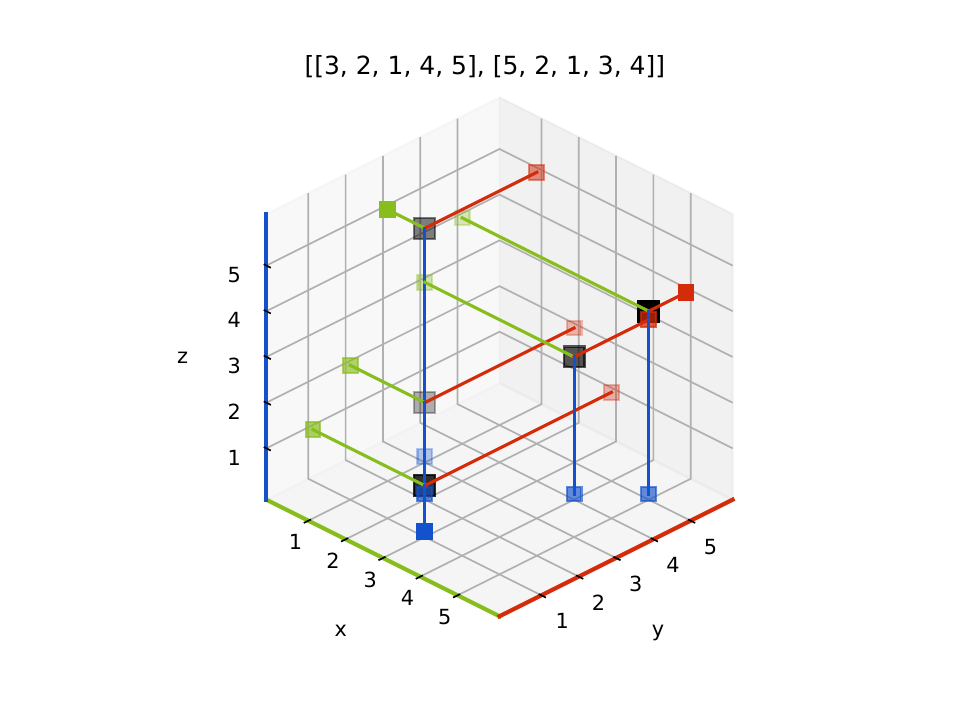}}
        \caption{A permutation and a $3-$permutation.}
        \label{fig:ex_tree}
    \end{figure}

\subsection*{Directions of dimension $d$}

A \emph{direction} ${\bf dir}$ is an element of $\{+1,-1\}^d$. A direction is \emph{negative} with respect to the axis $k$ if its $k^{th}$ element is $-1$. In a $d$-permutation, there are $2^{d}$ different directions and $2^{d-1}$ negative directions of each axis. In what follows, we focus on directions that are negative with respect to the last axis (whose coordinate is $x_{d-1}$), we denote this set (in dimension $d$) by $\mathbb{F}^d$. 

Let also ${\bf dir}$ be a direction, the \emph{opposite} of ${\bf dir}$, denoted $(-{\bf dir})$, is the direction such that $(-{\bf dir})=(-1)\times {\bf dir}$.
\begin{definition}
    Let $\bpi$ be a $d$-permutation with $n$ points and let $p_1$ resp. $p_2$ be two points in $\bpi$. The \textbf{direction} between the two points $(p_1,p_2)$, denoted $\textbf{dir}(p_1,p_2)$, is defined as the direction ${\bf dir}$ such that $\left(\text{sign}(\pi_0(p_2)-\pi_0(p_1)),\ldots, \text{sign}(\pi_{d-1} (p_2) - \pi_{d-1}(p_1))\right)={\bf dir}$.
    \label{def:direction}
\end{definition}

In a $d-$permutation, each point possesses $2^{d}$ hyperoctants that are each associated to a direction. For two points $p_a$ and $p_b$ one has ${\bf dir}(p_a,p_b)= {\bf f}$ if the point $p_b$ lies in the hyperoctant of $p_a$ associated with the direction {\bf f}. Note that if ${\bf dir}(p_a,p_b)= {\bf f}$, one also has that ${\bf dir}(p_b,p_a)= {\bf  -f}$. In order to characterize the directions between the points of a $d-$permutation, we will only consider the directions in $\mathbb{F}^d$, then always write ${\bf dir}(p_a,p_b)$ where $p_a$ is the point with the greater value of the coordinate $x_{d-1}$.

\begin{example}
Figure \ref{fig:directions} shows in two and three dimensions the quadrants and octants of the points of a permutation with their associated directions. For instance in three dimensions, if a second point $p_2$ lies in the yellow region of the first point $p_1$, one has ${\bf dir}(p_1,p_2)= (-,+,-)$.
\begin{figure}[!htb]
        \center{\minipdf{0.48}{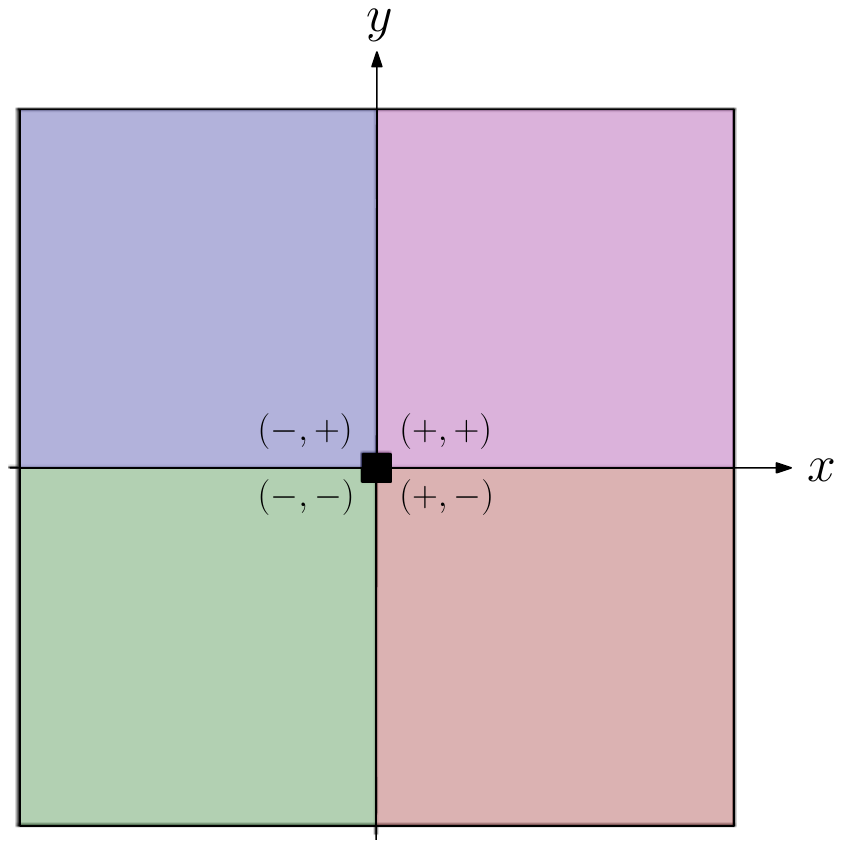} 	\minipdf{0.4}{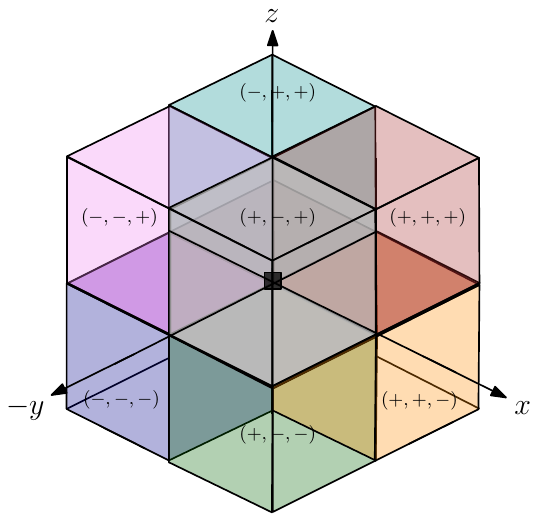}}
        \caption{On the left the quadrants of a point in a permutation. On the right the octants of a point in a $3-$permutation.}
        \label{fig:directions}
    \end{figure}
\end{example}

\subsection*{${d}$-ary trees}

A $d-$ary tree is a rooted tree in which all nodes have either $d$ distinguishable children or none. Nodes with no children are called leaves and the other are called internal nodes. 

In the following, we consider $2^{d-1}-$ary trees whose construction is governed by the directions in $\mathbb{F}^d$. Accordingly, we distinguish the children of each internal node by associating them with these directions. We denote by $\mathbb{T}^{2^{d-1}}_{n}$ the set of $2^{d-1}$-ary trees with $n$ internal nodes in which each child of a node is labeled by a direction in $\mathbb{F}^d$.

Given a tree $\cT$, we denote by $\cT(p)$ the subtree of $\cT$ whose root is the internal node $p$, we also denote by $\cT_{\bf dir}(p)$ the subtree defined by the child of direction ${\bf dir}$ of the internal node $p$. 

\begin{figure}[!htb]
        \center{\minipdf{0.7}{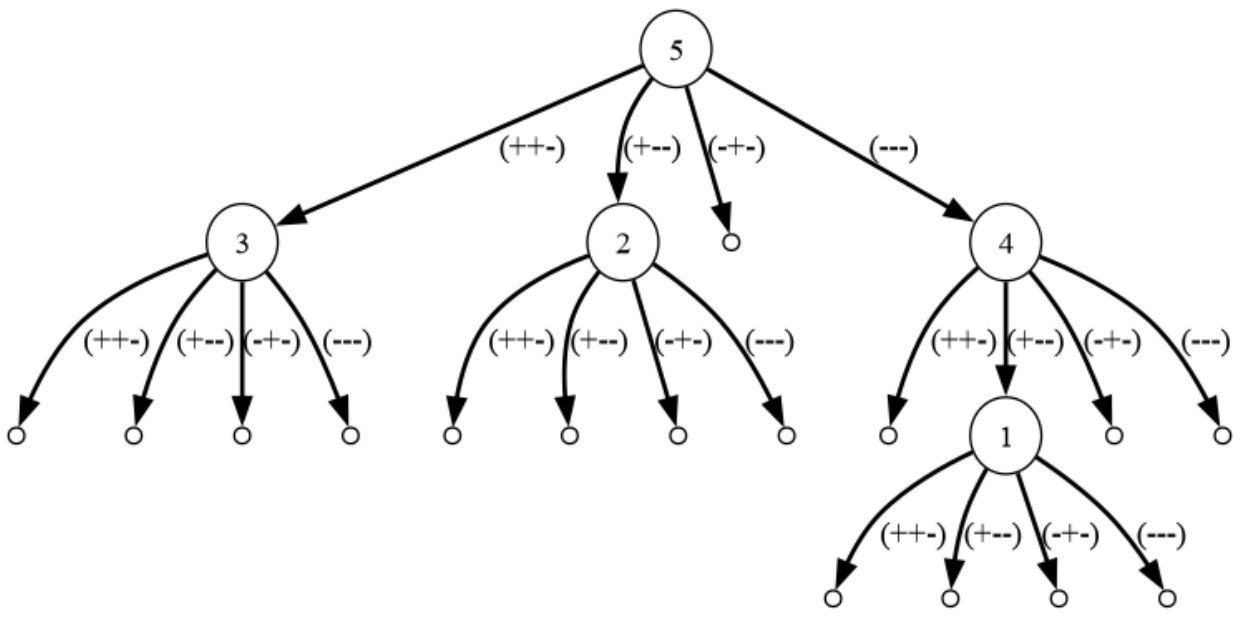}}
        \caption{A quaternary tree in $\mathbb{T}^{4}_5$}
        \label{fig:ex_tree}
    \end{figure}

\section{Mappings from $d-$permutations to $2^{d-1}-$ary trees}
\label{sec:section1}

\subsection{From permutations to trees}

Let $\bpi$ be a $d-$permutation of size $n$, the maximal point of $\bpi$ with respect to the coordinate $x_{d-1}$, denoted by $p_{max,d-1}$, is the point for which $\pi_k(p_{max,d-1})=n$. We also denote by $\bpi^{\bf f}$ the $d-$permutation defined by the set of points of $\bpi$ for which the direction with respect to $p_{max,d-1}$ is ${\bf f}$ (i.e all the points $p'$ of $\bpi$ for which ${\bf dir}(p_{max,d-1},p')= {\bf f}$). This splits $\bpi$ in $2^{d-1}$ subpermutations, each associated to a direction in $\mathbb{F}^d$. 
\begin{definition}
Let $\bpi$ be a $d$-permutation, the \emph{max-tree} of $\bpi$ with respect to the axis $d-1$, denoted $\cT_{max}(\bpi)$, is the $2^{d-1}$-ary tree obtained recursively such that:
\begin{itemize}
\item  The maximal point $p_{max,d-1}$ of $\bpi$ is the root of the tree.
\item It has $2^{d-1}$ subtrees: each such subtree is associated to a  direction $ {\bf f} \in \mathbb{F}^d$ and is given by  $\cT_{max}(\bpi^{\bf f})$. 
\item The recursion ends with leaves associated to the direction ${\bf dir} \in \mathbb{F}^d$ when the corresponding subpermutation associated with {\bf dir} is empty.  
\end{itemize} 
\label{def:perm2tree}
\end{definition}

The max-tree of a permutation with respect to another axis $k$ is defined similarly by considering the maximal point with respect to that axis and recursively applying the construction using the corresponding coordinate. Dually, one can define the min-tree of a d-permutation. In this case, the construction proceeds by choosing the minimal point with respect to a given axis and recursing upward rather than downward.

\begin{example}
Figure \ref{fig:ex-tree-3d} shows the $3$-permutation $\bpi=\perm{32145}{52134}$ and its max-tree with respect to the $z$ axis. The names of the internal nodes correspond to the position with respect to the $z$ axis of their corresponding point in the $3-$permutation.
\begin{figure}[!htb]
        \center{\minipdf{0.54}{figs/32145_52134.pdf} 	\minipdf{0.45}{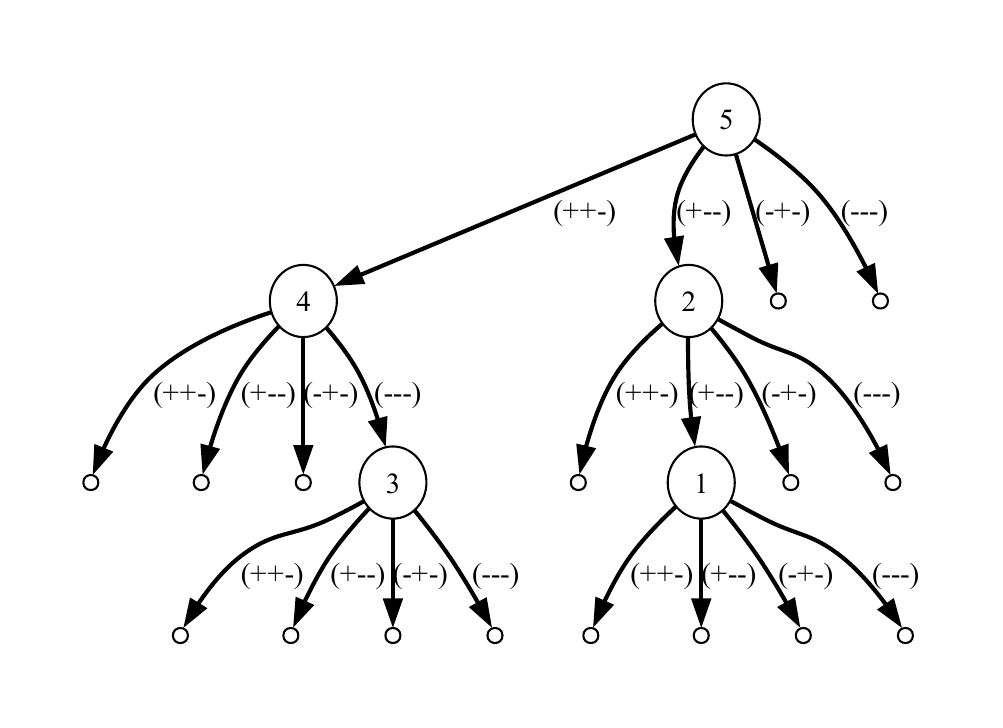}}
        \caption{A $3-$permutation and its max-tree with respect to the $z$ axis}
        \label{fig:ex-tree-3d}
    \end{figure}
    
Figure \ref{fig:ex-tree-2d} shows the $2$-permutation $\sigma=165243$ and its max-tree with respect to the axis $x$. The names of the internal nodes correspond to the position with respect to the coordinate $x$ of their corresponding point in the permutation. In the left part of the figure, the axes of the permutation are swapped to make the construction of the max-tree with respect to the $x-$axis easier to visualize.

\begin{figure}[!htb]
        \center{\minipdf{0.48}{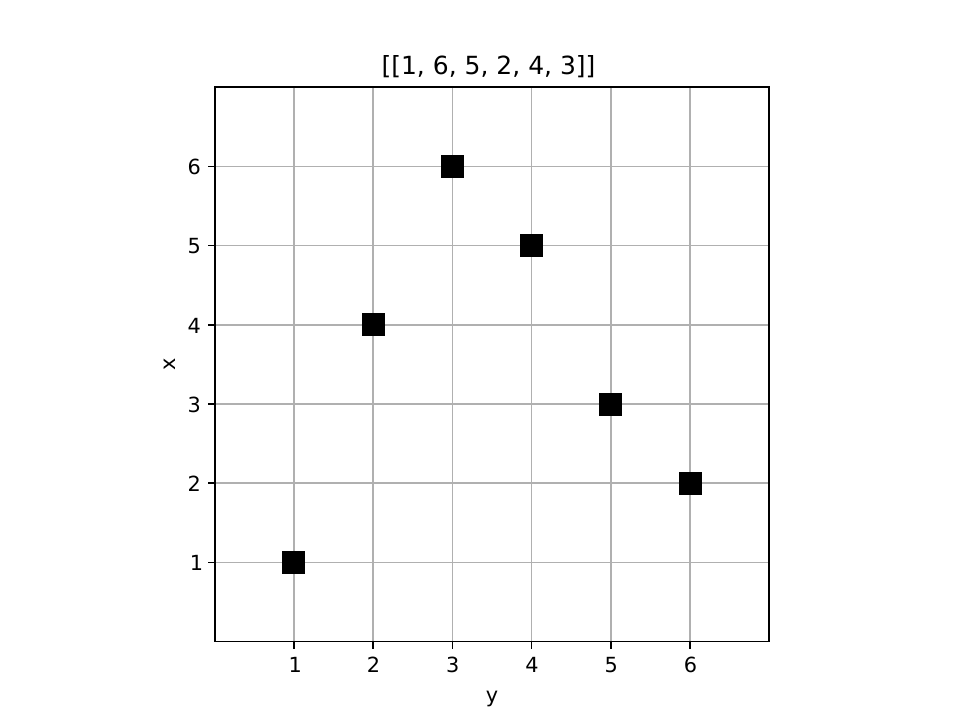} 	\minipdf{0.30}{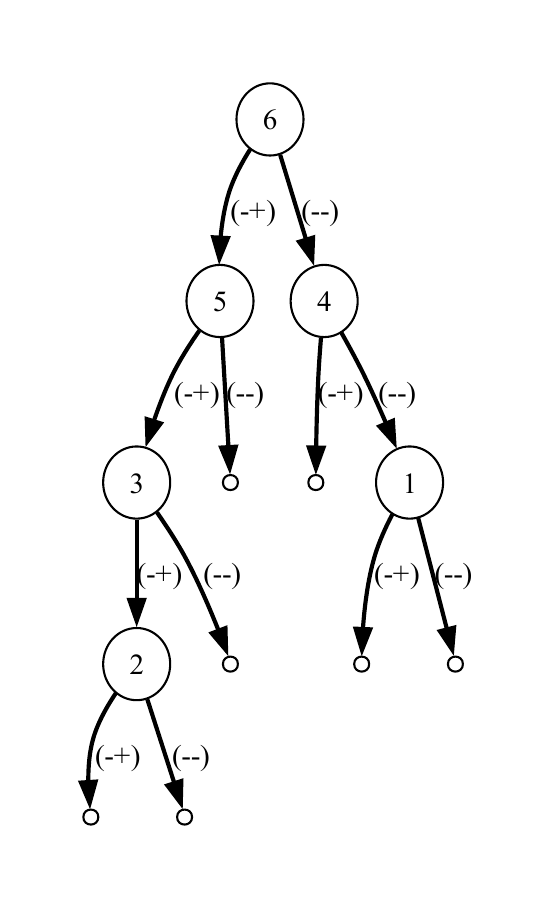}}
        \caption{A permutation and its max-tree with respect to the $x$ axis.}
        \label{fig:ex-tree-2d}
    \end{figure}
\end{example}

Definition \ref{def:perm2tree} yields a mapping $\gamma^d: S^{d-1}_n \to \mathbb{T}^{2^{d-1}}_{n}$ such that $\gamma^d(\bpi):=\cT_{max}(\bpi)$.

\begin{rem}
\label{rem:dir}
Each point of a $d-$permutation $\bpi$ corresponds to an internal node in its max-tree $\cT_{max}(\bpi)$. Let $p_a$ and $p_b$ be two points in $\bpi$ and let $r_a$ and $r_b$ be their corresponding internal nodes in $\cT_{max}(\bpi)$. Let also $r_{c_1} \ldots r_{c_m}$ be the ancestors of $r_b$ in $\cT$ and let $p_{c_1}$ \ldots $p_{c_m}$ be their corresponding points in $\bpi$. 

One has that $r_a \in  \cT_{\bf f}(r_b)$ with ${\bf f} \in \mathbb{F}_k$ if and only if {\bf dir}$(p_b,p_a)=\bf f$ and if for $i=1,\ldots,m$ one has that {\bf dir}$(p_{c_i},p_a)= {\bf dir} (p_{c_i},p_b)$.
\end{rem}

It can be shown by induction that the mapping $\gamma^d$ is surjective (see section \ref{sec:proof}), however this mapping is not injective. For example, the two permutations 41523 and 43512 are mapped to the same tree by $\gamma^2$. These two permutations and the corresponding tree are shown in Figure \ref{fig:ex-surjec}.

\begin{figure}[!htb]
        \center{ \minipdf{0.36}{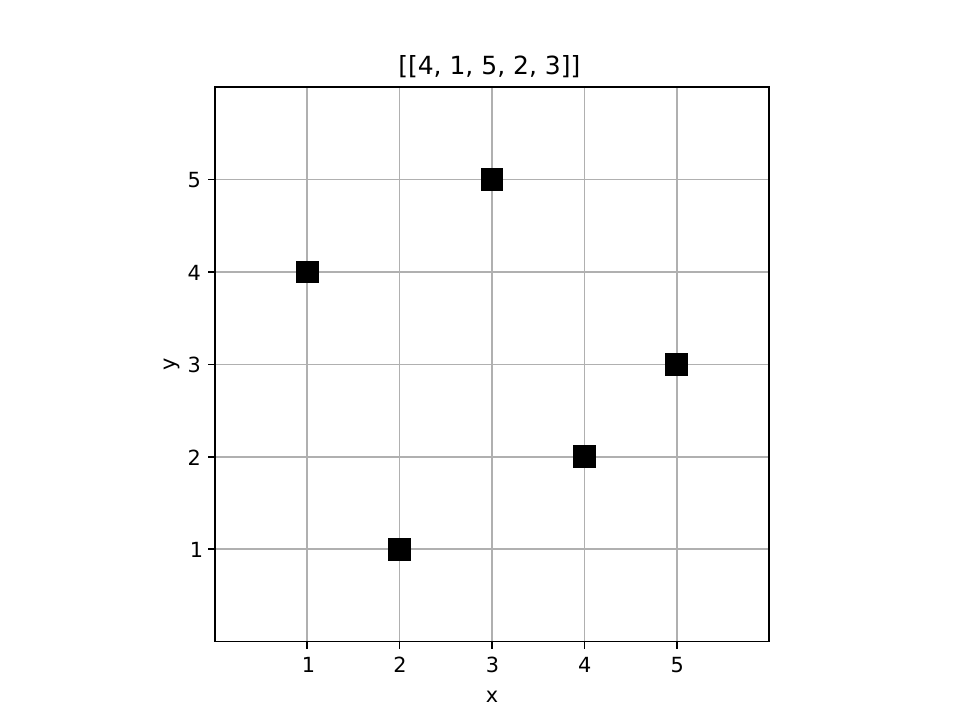} \minipdf{0.36}{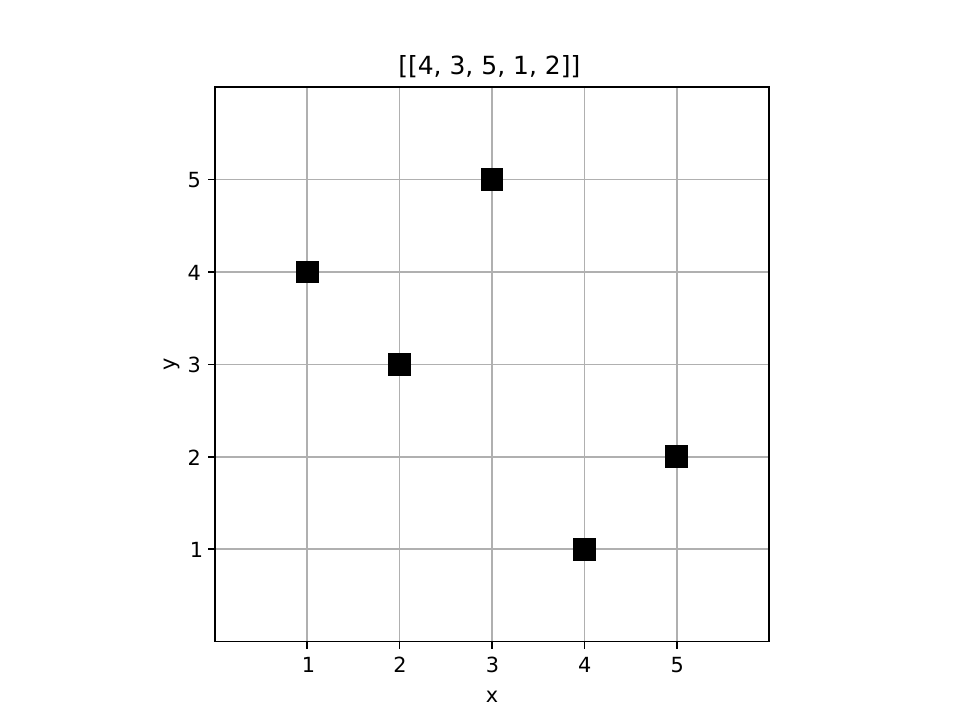} \minipdf{0.25}{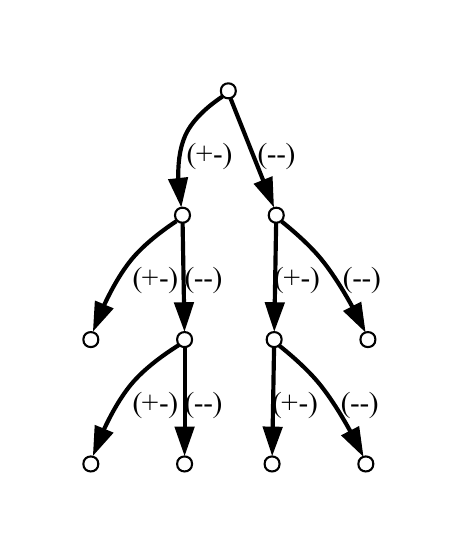}}
        \caption{Two permutations with the same max-tree with respect to the axis $y$.}
        \label{fig:ex-surjec}
    \end{figure}

\subsection{Compatible total orders and main theorem}

Let $C$ be a total order on a set of directions $F$. We say that $C$ is \emph{compatible} with respect to the axis $k$ if,  in $C$, there is no direction for which the $k^{th}$ coordinate is a $+$ that precedes a direction for which the $k^{th}$ coordinate is a $-$. Similarly, let ${\bf C}= (C_0,\ldots,C_{d-1})$ be a tuple of $d$ total orders on $F$, we say that {\bf C} is a compatible set of total orders if, for $i=1,\ldots,d$, $C_i$ is compatible with respect to the axis $i$.

\begin{definition}
    Let ${\bf C}=C_0,\ldots,C_{d-1}$ be a compatible set of total orders on $\mathbb{F}^d$. Let $\bpi$ be a $d$-permutation and let $\cT=\gamma^d(\bpi)$ be its max-tree with respect to the axis $d-1$. 

    We say that $\bpi$ is \emph{admissible} with respect to ${\bf C}$ if, for every internal node $r$ of $\cT$, the subpermutations of $\bpi$ corresponding to the subtrees of $\cT$ attached to $r$ form ordered blocks on each coordinate axis $l$, according to the order prescribed by $C_l$.
    
    More precisely, for every coordinate $l\in\{1,\ldots,d\}$ and for any two directions ${\bf f}_1,{\bf f}_2 \in \mathbb{F}^d$ such that ${\bf f}_1$ precedes ${\bf f}_2$ in $C_l$, every point corresponding to a node of the subtree $\cT_{{\bf f}_1}(r)$ has smaller $l$-th coordinate than every point corresponding to a node of the subtree $\cT_{{\bf f}_2}(r)$.
\label{def:canonical}
\end{definition}

We denote by $S^{d-1}_{n,{\bf C}}$ the set of $d-$permutations of size $n$ that is admissible with respect to ${\bf C}$. We also denote the restriction of $\gamma^d$ to $S^{d-1}_{n,{\bf C}}$ by $\gamma^d_{\bf C}$.

\begin{example}
    Let us consider ${\bf C_{ex}}= C_0, C_1, C_2$ the compatible set of total orders on $\mathbb{F}^3$ defined such that:
    $$C_0: (-,-,-)<(-,+,-)<(+,-,-)<(+,+,-)\,,$$
    $$C_1: (-,-,-)<(+,-,-)<(-,+,-)<(+,+,-)\,,$$
    $$C_2: (-,-,-)<(+,-,-) < (-,+,-) <(+,+,-)\,.$$

    A $3-$permutations that is admissible with respect to ${\bf C}$ has the general form given in Figure \ref{fig:ex-perm_ord}, where each $\bpi^f$ (with $f \in \mathbb{F}^d$) has the same structure recursively. 

    \begin{figure}[!htb]
        \center{ \minipdf{0.40}{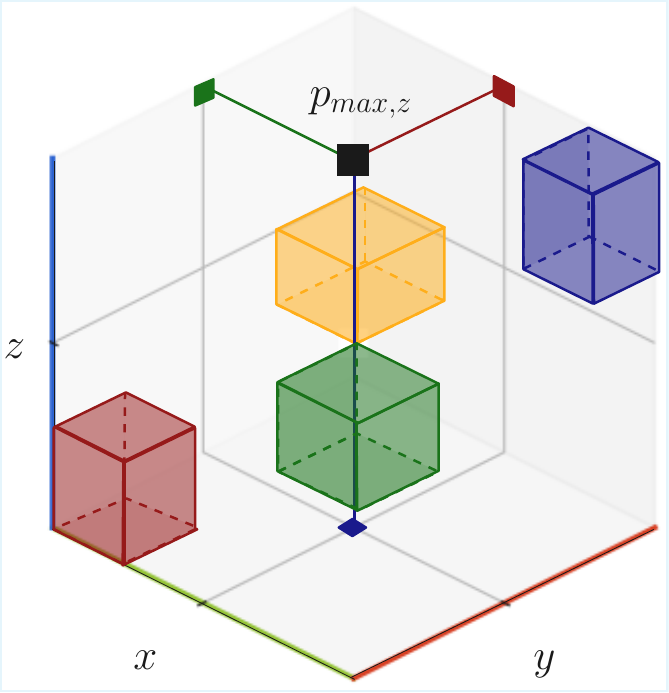} \minipdf{0.40}{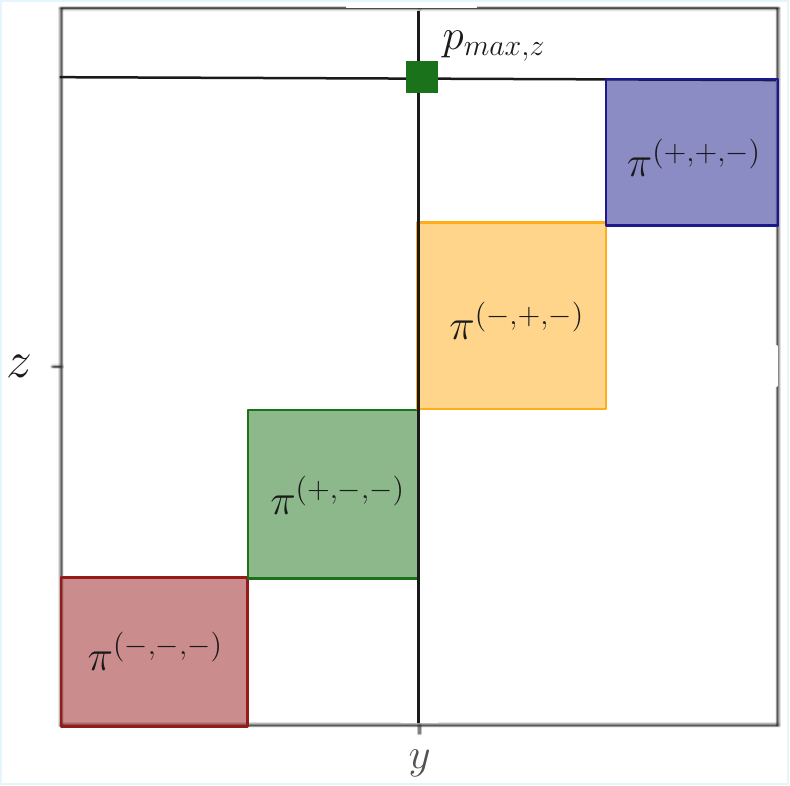} \minipdf{0.40}{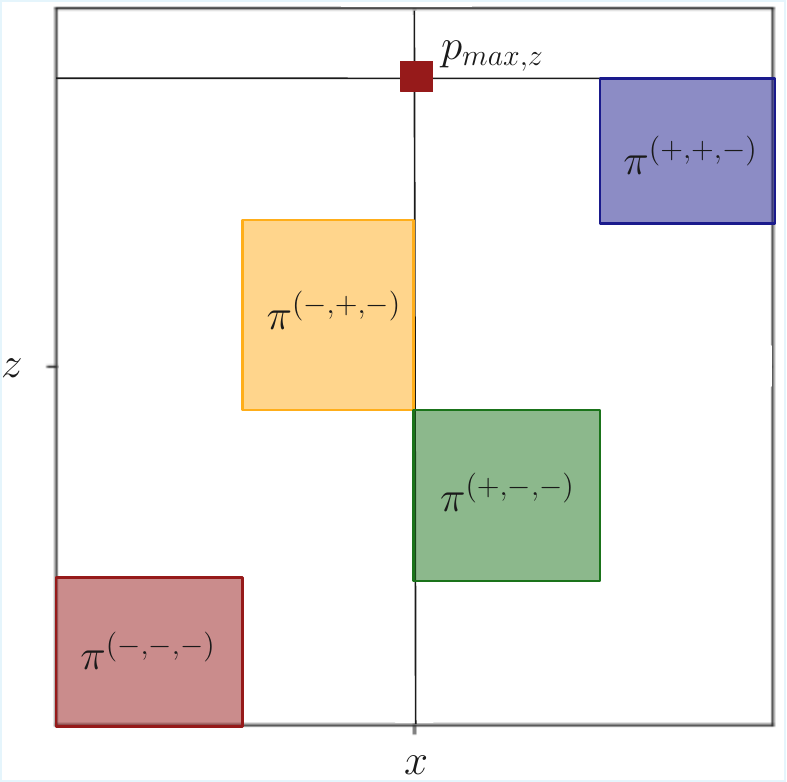} \minipdf{0.40}{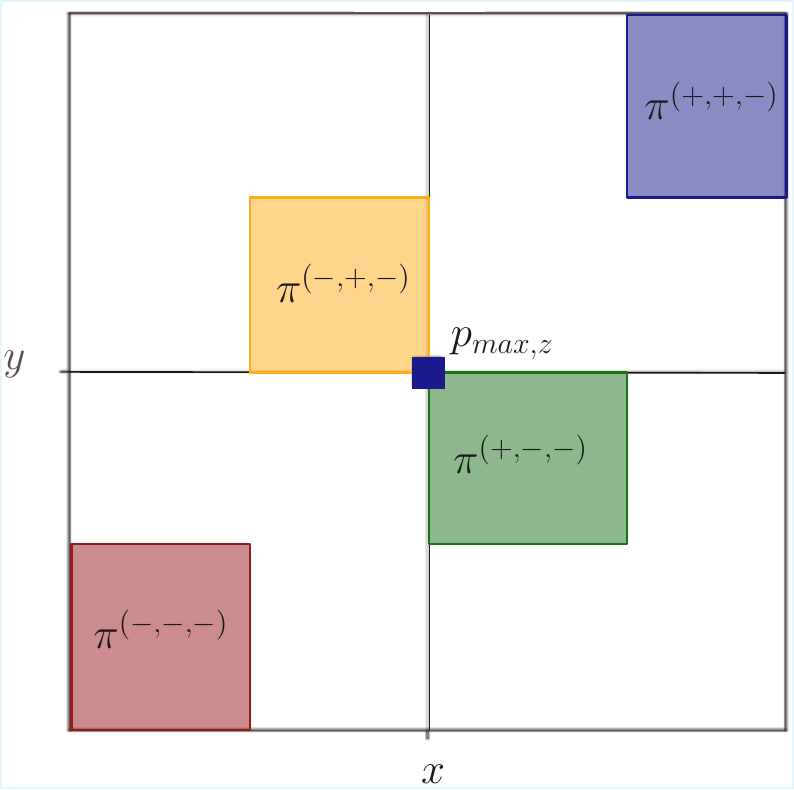}}
        \caption{The structure of a $3-$permutation respecting ${\bf C_{ex}}$.}
        \label{fig:ex-perm_ord}
    \end{figure}

\end{example}

 In the next subsection, we show that the mapping $\gamma^d$ is surjective on $\mathbb{T}^{2^{d-1}}_{n}$. Then, we show that for any compatible set of total order ${\bf C}$, the fiber of each element of $\mathbb{T}^{2^{d-1}}_{n}$  under $\gamma^d$ contains a unique $d-$permutation that is admissible with respect to ${\bf C}$. This leads to the following theorem.
 
\begin{thm}
    For any compatible set of total order ${\bf C}$, the set $S^{d-1}_{n,{\bf C}}$ is in bijection with $\mathbb{T}^{2^{d-1}}_{n}$.
    \label{thm:main1}
\end{thm}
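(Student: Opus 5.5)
We will show that $\gamma^d_{\bf C}$ is a bijection by building an explicit inverse $\phi_{\bf C}:\mathbb{T}^{2^{d-1}}_n\to S^{d-1}_{n,{\bf C}}$. It is defined recursively on the number $n$ of internal nodes, and we then check the two compositions by induction on $n$. For $n=0$ both sets contain a single element, so there is nothing to prove. For $n\ge 1$, write $\cT$ with root $r$ and subtrees $\cT_{\bf f}(r)$ for ${\bf f}\in\mathbb{F}^d$, and let $n_{\bf f}$ be the number of internal nodes of $\cT_{\bf f}(r)$. By induction, $\bpi_{\bf f}:=\phi_{\bf C}(\cT_{\bf f}(r))\in S^{d-1}_{n_{\bf f},{\bf C}}$ is already defined.

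We then glue these pieces. Put the root point $p$ at coordinate $n$ on the last axis. On each axis $l$, list the directions in the order $C_l$ and give consecutive blocks of values to the subpermutations in that order. Since $C_l$ is compatible with axis $l$, all directions with $-$ in coordinate $l$ precede those with $+$. Set the $l$-th coordinate of $p$ to $1+\sum_{{\bf f}:\,{\bf f}_l=-}n_{\bf f}$, which lies between the last $-$ block and the first $+$ block. Inside the block of ${\bf f}$ on axis $l$, the values are those of $\bpi_{\bf f}$ shifted. For the last axis, every direction in $\mathbb{F}^d$ has $-$ in that coordinate, so all blocks lie below $n$ and the root point is indeed the maximum. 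This is exactly why the statement concerns max-trees with respect to axis $d-1$.

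The key checks for $\gamma^d(\phi_{\bf C}(\cT))=\cT$ go as follows. The maximal point on axis $d-1$ is $p$. A point coming from $\bpi_{\bf f}$ has direction ${\bf f}$ relative to $p$ on every axis, by the choice of $p$'s coordinates. Hence $\bpi^{\bf f}$, the set of points in the hyperoctant ${\bf f}$ of $p$, is exactly the standardized copy of $\bpi_{\bf f}$. Since the max-tree only depends on relative orders, the recursion and induction give the claim.

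Admissibility of $\phi_{\bf C}(\cT)$ holds at the root by the block construction. At deeper nodes it holds because each block is an order-preserving copy of an admissible permutation, and admissibility is invariant under such a shift. The one subtlety is that Definition \ref{def:canonical} refers to nodes of $\cT$, and we need those to match the nodes of the subtrees; this follows from the previous paragraph.

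For injectivity, i.e. $\phi_{\bf C}(\gamma^d_{\bf C}(\bpi))=\bpi$, take $\bpi$ admissible. Admissibility at the root forces, on each axis $l$, the points of the subtrees $\cT_{\bf f}(r)$ to occupy consecutive blocks in the order $C_l$. The root point's coordinate is then forced as well: it must exceed exactly the points in directions with ${\bf f}_l=-$, and these form an initial segment of $C_l$ by compatibility. Each $\bpi^{\bf f}$ is admissible with respect to ${\bf C}$, because its max-tree is $\cT_{\bf f}(r)$ (by Remark \ref{rem:dir}/Definition \ref{def:perm2tree}) and the conditions at its nodes are inherited. By induction, $\bpi^{\bf f}=\phi_{\bf C}(\cT_{\bf f}(r))$ after standardization, so $\bpi$ coincides with $\phi_{\bf C}(\cT)$.

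The main obstacle is this uniqueness step. We must argue carefully that the block positions, together with the position of the root point between the $-$ and $+$ blocks, leave no freedom at all. This is precisely where compatibility of ${\bf C}$ is used: without it, the prescribed block order could contradict the octant containing each subpermutation, and the set could be empty. Surjectivity of $\gamma^d$ on $\mathbb{T}^{2^{d-1}}_n$ is obtained along the way, since $\phi_{\bf C}$ provides a preimage for every tree.
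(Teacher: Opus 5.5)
Your proof is correct. Its existence half is the same block-gluing construction the paper uses for surjectivity of $\gamma^d_{\bf C}$. The paper only asserts that compatibility makes such an arrangement possible. You make it explicit: you fix the root's coordinates so that every block lies in its own hyperoctant, and you check that the glued permutation is admissible at every depth. The paper's induction leaves that check implicit.

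Where you genuinely diverge is injectivity.

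\textbf{The paper's route.} It does not recurse. It takes two distinct $\bpi_A,\bpi_B$ in the same fiber and picks a pair of nodes $r_1,r_2$ whose points are ordered differently on some axis $l$. If one node is an ancestor of the other, the common direction ${\bf dir}$ forces the same order in both permutations. Otherwise, the first common ancestor $r_3$ places them in subtrees ${\bf f_1}\neq{\bf f_2}$. The relative position of ${\bf f_1}$ and ${\bf f_2}$ in $C_l$ then forces their order in any admissible permutation, so $\bpi_A$ and $\bpi_B$ cannot both be admissible.

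\textbf{Your route.} You argue top-down. Admissibility at the root, together with the direction constraints and the compatibility of $C_l$, pins each $\bpi^{\bf f}$ to a fixed interval on every axis and fixes the root's coordinates. You then apply induction to the standardized $\bpi^{\bf f}$, whose max-tree is $\cT_{\bf f}(r)$ and which inherits admissibility.

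\textbf{Comparison.} The paper's pairwise argument is shorter and avoids standardization bookkeeping. It shows directly that the order of any two points on axis $l$ is determined by the tree and $C_l$ alone, which is exactly the order $<_{C_l,l}$ the appendix uses to invert $\gamma^d_{\bf C}$. Your recursion instead packages existence and uniqueness into a single explicit inverse $\phi_{\bf C}$, checked in both directions by structural induction.

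A small point on your closing remark: the danger that ``the set could be empty'' without compatibility concerns the existence step. In the uniqueness step, compatibility only serves to place the root between the $-$ and $+$ blocks.
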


Additionally, this bijection can be be naturally extended to trees whose arity is not given by a power of $2$ (through another restriction).

Let $F$ be a subset of $\mathbb{F}^d$ and let ${\bf C}_F$ be a compatible set of total orders on $F$. Let also $\bpi$ be a $d-$permutation  and $\cT$ be its max-tree  with respect to the axis $d-1$. We say that $\bpi$ is admissible with respect to ${\bf C}_F$ if it satisfies the condition of Definition \ref{def:canonical} and if there are no two internal nodes $r_1$ and $r_2$ in $\cT$ such that for any direction ${\bf f} \notin F$ one has $r_1 \in \cT_{\bf f}(r_2)$. 

Let us also consider $\mathbb{T}^{2^{d-1}}_{n}(F)$, the subset of $\mathbb{T}^{2^{d-1}}_{n}$ defined such that
$$\mathbb{T}^{2^{d-1}}_{n}(F):=\{ \cT \in \mathbb{T}^{2^{d-1}}_{n}: \text{there are only internal nodes of direction } {\bf f} \in F \text{ in } \cT \} \;.$$ It is clear that the trees in $\mathbb{T}^{2^{d-1}}_{n}(F)$ are the max-trees of the permutations that are admissible with respect to ${\bf C}_F$. There is thus a bijection between $S^{d-1}_{n,{\bf C}_F}$ and $\mathbb{T}^{2^{d-1}}_{n,d-1}(F)$.

Additionally, if $F$ is a subset of $\mathbb{F}_k$ containing $k$ directions, there is a bijection between the trees in $\mathbb{T}^{2^{d-1}}_{n}(F)$ and the set of $k-$ary trees with $n$ internal nodes: Let $\cT$ be a tree in $\mathbb{T}^{2^{d-1}}_{n}(F)$, for all directions ${\bf f} \in \mathbb{F}^d$ such that $ {\bf f} \notin F$, there are no internal nodes of direction ${\bf f}$ in $\cT$. One can thus remove all leaves of those directions in $\cT$ to obtain a $k-$ary tree. For example, let us consider the quaternary tree on the left of Figure \ref{fig:ex-ternary_bijec}. This tree has no internal node of direction $(+,+,-)$, one can thus remove all leaves of this direction to obtain the ternary tree shown on the right of Figure \ref{fig:ex-ternary_bijec}. Note also that one can do the inverse operation and start with the ternary tree and add a leaf of direction $(+,+,-)$ to each of its internal nodes to obtain the quaternary tree.  

\begin{figure}[!htb]
        \center{\minipdf{0.49}{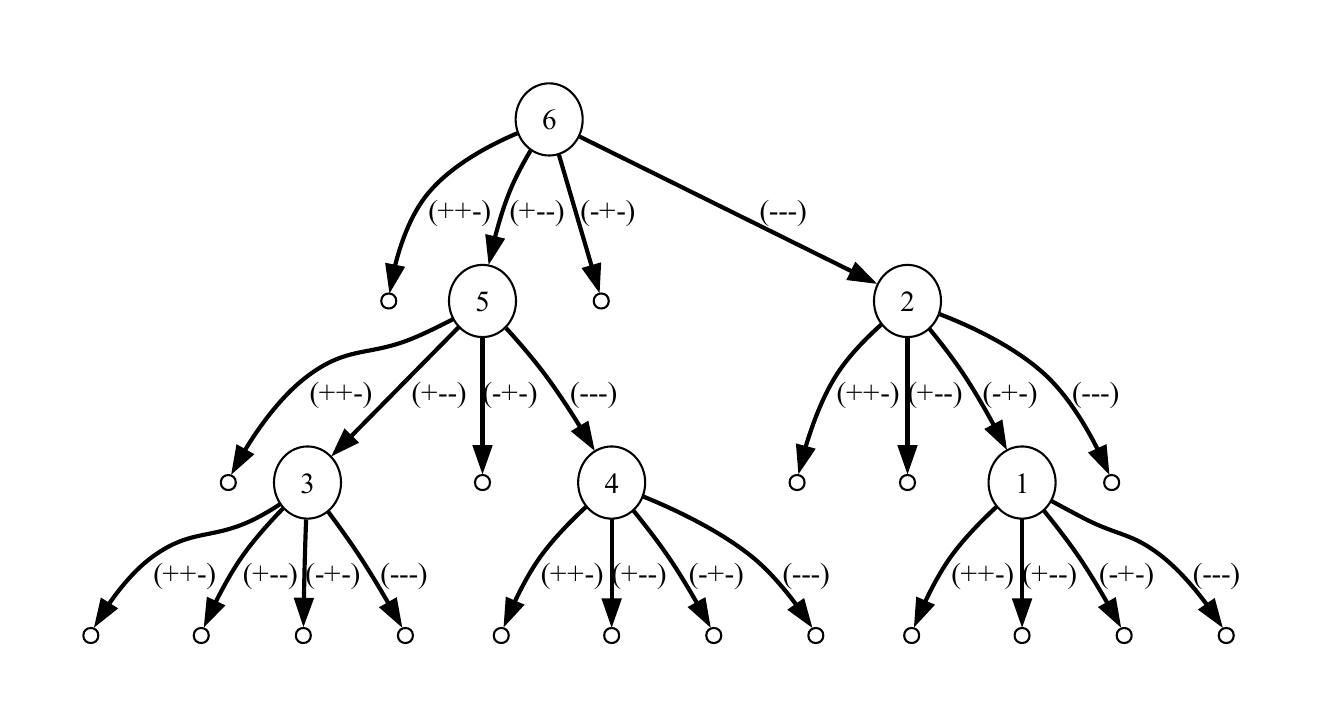} \minipdf{0.49}{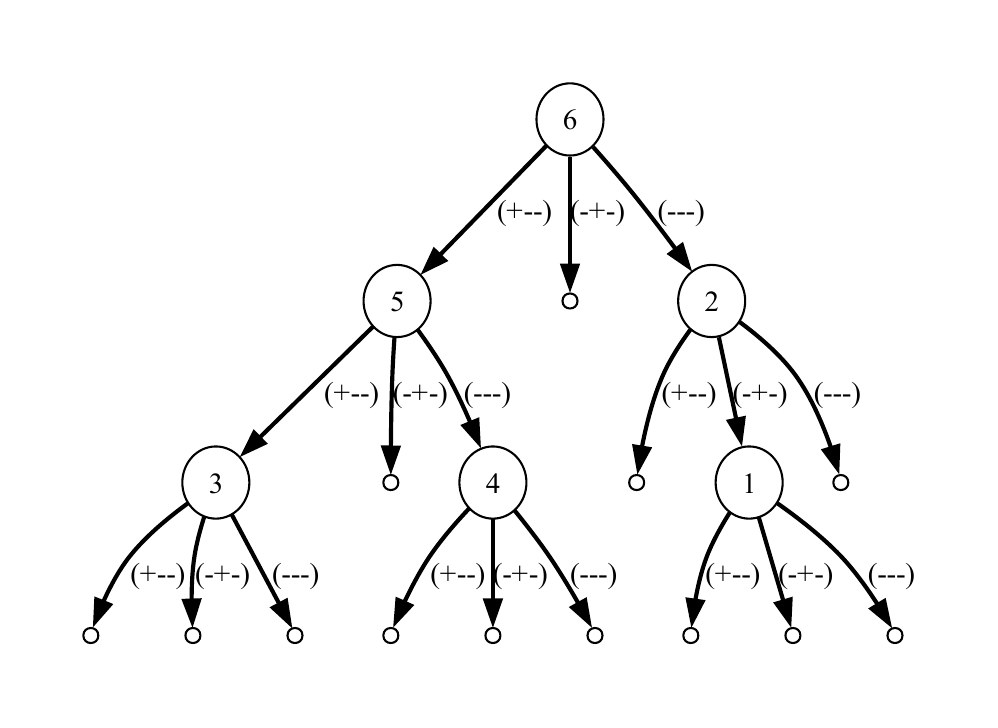}}
        \caption{On the left a quaternary tree. On the right the ternary tree obtained from it by removing the leaves of direction $(++-)$. }
        \label{fig:ex-ternary_bijec}
    \end{figure}

\begin{cor}
    Let $F$ be a subset of $\mathbb{F}^d$ containing $k$ directions and let ${\bf C}_F$ be a set of $d$ compatible total orders on $F$. The set of permutations $S^{d-1}_{n,{\bf C}_F}$ is in bijection with the set of $k-$ary trees.
    \label{cor:bijec}
\end{cor}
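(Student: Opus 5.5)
The plan is to compose two bijections: one between $S^{d-1}_{n,{\bf C}_F}$ and $\mathbb{T}^{2^{d-1}}_{n}(F)$, and one between $\mathbb{T}^{2^{d-1}}_{n}(F)$ and $k$-ary trees with $n$ internal nodes.

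\emph{Step 1: restricting Theorem \ref{thm:main1} to $F$.} Extend ${\bf C}_F$ to a compatible set of total orders ${\bf C}$ on all of $\mathbb{F}^d$. For each axis $l$, insert the directions of $\mathbb{F}^d\setminus F$ into $C_l$ so that every direction with $l$-th coordinate $-$ still precedes every direction with $l$-th coordinate $+$. This is always possible: keep the order of $F$ and put the missing $-$ directions immediately before the first $+$ element of $F$ and the missing $+$ directions after it. By Theorem \ref{thm:main1}, $\gamma^d_{\bf C}$ is a bijection $S^{d-1}_{n,{\bf C}}\to\mathbb{T}^{2^{d-1}}_n$.

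Now compare admissibility. A permutation $\bpi$ is admissible for ${\bf C}_F$ exactly when its max-tree has no internal node in a subtree $\cT_{\bf f}(r)$ with ${\bf f}\notin F$, and the block condition of Definition \ref{def:canonical} holds for the directions in $F$. For such permutations the block condition with respect to ${\bf C}$ coincides with the block condition with respect to ${\bf C}_F$, since empty subtrees impose no constraints. Hence $S^{d-1}_{n,{\bf C}_F}=\{\bpi\in S^{d-1}_{n,{\bf C}}:\gamma^d(\bpi)\in\mathbb{T}^{2^{d-1}}_n(F)\}$, and restricting the bijection of Theorem \ref{thm:main1} gives a bijection $S^{d-1}_{n,{\bf C}_F}\to\mathbb{T}^{2^{d-1}}_n(F)$.

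The point needing the most care is this equality. One inclusion needs the extension ${\bf C}$ to remain compatible, which the construction above guarantees. The other needs that the extra order relations involving the directions outside $F$ are vacuous, which holds because those subtrees are empty. The argument works for any choice of extension, so the resulting bijection does not depend on that choice.

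\emph{Step 2: pruning.} Define a map from $\mathbb{T}^{2^{d-1}}_n(F)$ to $k$-ary trees by deleting, at every internal node, the $2^{d-1}-k$ leaf children whose directions lie outside $F$. These children are leaves by definition of $\mathbb{T}^{2^{d-1}}_n(F)$. What remains is a tree in which every internal node has exactly $k$ children, labelled by the $k$ directions of $F$. Fixing a bijection between $F$ and the $k$ child positions makes it a $k$-ary tree, and the number of internal nodes is unchanged.

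The inverse map adds back a leaf for each missing direction at every internal node. The two maps are clearly mutually inverse, so this is a bijection onto the set of $k$-ary trees with $n$ internal nodes. Composing it with Step 1 proves Corollary \ref{cor:bijec}.
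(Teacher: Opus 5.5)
Your proof is correct and follows the same route as the paper. You restrict the bijection of Theorem \ref{thm:main1} to permutations whose max-trees lie in $\mathbb{T}^{2^{d-1}}_{n}(F)$, then prune the leaves of directions outside $F$; extending ${\bf C}_F$ to a compatible set of orders on all of $\mathbb{F}^d$ is a clean justification of the step the paper only calls ``clear'' (strictly, compatibility of the extension is needed to invoke Theorem \ref{thm:main1}, not for either inclusion of your set equality, which only uses that the extension restricts to ${\bf C}_F$ on $F$).
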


\subsection{Proof of the main theorem}
\label{sec:proof}

Let us now prove Theorem \ref{thm:main1} in this subsection. We first prove that $\gamma^d_{\bf C}$ is surjective on $\mathbb{T}^{2^{d-1}}_{n}$ (which implies that $\gamma^d$ is also surjective on $\mathbb{T}^{2^{d-1}}_{n}$). Then, we prove that $\gamma^d_{\bf C}$ is injective.

\begin{lem}
    The mapping $\gamma^d_{\bf C}$ is surjective.
\label{lem:injective}
\end{lem}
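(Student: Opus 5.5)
We argue by strong induction on $n$. The claim is that for every tree $\cT\in\mathbb{T}^{2^{d-1}}_{n}$ there is a $d$-permutation $\bpi\in S^{d-1}_{n,{\bf C}}$ with $\gamma^d(\bpi)=\cT$. For $n=0$ the empty permutation maps to the single leaf, and for $n=1$ the unique $d$-permutation works. For the inductive step, let $r$ be the root of $\cT$ and, for each ${\bf f}\in\mathbb{F}^d$, let $n_{\bf f}$ be the number of internal nodes of $\cT_{\bf f}(r)$, so that $\sum_{\bf f} n_{\bf f}=n-1$. By induction each $\cT_{\bf f}(r)$ is the max-tree of some admissible $\bpi^{\bf f}\in S^{d-1}_{n_{\bf f},{\bf C}}$. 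We then glue these blocks together around a new maximal point.

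\textbf{Construction.} For each axis $l\in\{0,\ldots,d-2\}$, list the directions of $\mathbb{F}^d$ in the order $C_l$. Because $C_l$ is compatible with axis $l$, every direction with $-$ in coordinate $l$ comes before every direction with $+$. We assign to the blocks consecutive intervals of values on axis $l$ in the order given by $C_l$. The root point $p$ gets the single value that separates the "$-$" blocks from the "$+$" blocks, namely $1+\sum_{{\bf f}:\,f_l=-} n_{\bf f}$. Inside the interval assigned to ${\bf f}$, the points of $\bpi^{\bf f}$ keep their relative order on axis $l$. On the last axis $d-1$, $p$ gets value $n$. The other blocks are ordered according to $C_{d-1}$, which is automatically consistent since all directions in $\mathbb{F}^d$ are $-$ on that axis. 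Within a block, the relative order of $\bpi^{\bf f}$ is again kept. The result is a $d$-permutation $\bpi$ of size $n$, since on each axis the values form a partition of $[n]$ into intervals.

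\textbf{Verification.} First, $p$ is the maximum on axis $d-1$. Every point coming from $\bpi^{\bf f}$ lies in direction ${\bf f}$ from $p$, because on axis $l$ its value is below $p$'s exactly when $f_l=-$; compatibility is what guarantees this. So the subpermutation of $\bpi$ in direction ${\bf f}$ from $p$ is order-isomorphic to $\bpi^{\bf f}$. The max-tree construction depends only on relative orders, so $\gamma^d(\bpi)$ has root $p$ and children $\gamma^d(\bpi^{\bf f})=\cT_{\bf f}(r)$, which gives $\gamma^d(\bpi)=\cT$.

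Admissibility at the root holds by construction, because on each axis the blocks are ordered by $C_l$. Admissibility at a deeper node $r'$ reduces to admissibility of the $\bpi^{\bf f}$ containing it. Here one needs that $\cT(r')$, together with its subtrees, is computed entirely inside the block $\bpi^{\bf f}$, and that comparisons of coordinates inside a block are unchanged by the shift; both are immediate from the construction.

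The main point needing care is the compatibility argument. We must check that placing the blocks in $C_l$ order on every axis simultaneously still puts each block in the correct hyperoctant of $p$. That is exactly what compatibility provides, so the step is short but essential. If axis $0$ is indexed differently in the paper's convention ($C_0$ acting on positions), the same argument applies with axis $0$ treated like the others.
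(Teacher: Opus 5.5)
Your proof is correct and follows the same route as the paper. Both argue by induction on $n$: take admissible preimages of the root's subtrees and arrange them as ordered blocks according to ${\bf C}$ around a new maximal point, with compatibility of ${\bf C}$ guaranteeing that each block lands in the correct hyperoctant of that point. You go further than the paper only by making explicit the interval assignment (the root value $1+\sum_{{\bf f}:\,f_l=-} n_{\bf f}$ on axis $l$) and how admissibility passes to deeper nodes, both of which the paper covers only with ``such an arrangement is always possible.''
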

\begin{proof}
We show this by induction on the size of the $d-$permutations. At size 0, the only element of $\mathbb{T}^{2^{d-1}}_{0}$ is a single leaf which is the max tree of the empty $d-$permutation in $S^{d-1}_{0,{\bf C}}$. Similarly at size $1$, the only element of $\mathbb{T}^{2^{d-1}}_{1}$ is a single internal node with $2^{d-1}$ leaves, which is also the max-tree of the $d-$permutation with one point in $S^{d-1}_{1,{\bf C}}$.

Let us now consider a tree in $ \cT \in \mathbb{T}^{2^{d-1}}_{k}$. It consists of a root node together with $s$ subtrees $\cT_1, \ldots,\cT_s$ attached to this root. Without loss of generality, assume that each subtree $\cT_i$ is associated to a direction ${\bf dir}_i \in \mathbb{F}^d$  and contains $k_i$ internal nodes with $k_i <k$.

By the induction hypothesis, each subtree $\cT_i$ is the max-tree of a $d-$permutation ${\bsig}^{{\bf dir}_i}$. It follows that $\cT$ is the max-tree of any $d-$permutation $\bpi$ whose subpermutations with respect to the maximal point $p_{max,d-1}$ satisfy, for every ${\bf f} \in \mathbb{F}^d$, $\bpi^{\bf f}= {\bsig}^{\bf f}$. Moreover, for such a permutation to belong to $S^{d-1}_{k,{\bf C}}$ it suffices that the subpermutations $\bpi^{\bf f}$appear as ordered blocks according to the total orders prescribed by {\bf C} on each coordinate axis, as described in Definition~\ref{def:canonical}. Since {\bf C} is a compatible set of total orders such an arrangement is always possible. Therefore, there always exists at least one $d-$permutation $\bpi \in S^{d-1}_{k,{\bf C}}$ such that $\gamma^d_{\bf C}(\bpi)=\cT$.  
\end{proof}

\begin{lem}
    For any tree $\cT \in \mathbb{T}^{2^{d-1}}_{n}$ and for any compatible set of total order {\bf C}, there is no more than one $d-$permutation that is admissible with respect to {\bf C} in the fiber of $\cT$ under $\gamma^d$.
\label{lem:injective}
\end{lem}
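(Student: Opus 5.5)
We argue by strong induction on the number $n$ of internal nodes of $\cT$; equivalently, on the size of the $d-$permutations. The claim is that if $\bpi,\bpi' \in S^{d-1}_{n,{\bf C}}$ both satisfy $\gamma^d(\bpi)=\gamma^d(\bpi')=\cT$, then $\bpi=\bpi'$. Since a $d-$permutation is determined by its diagram $\Pi(\bpi)$, it is enough to show that the position vectors of all points coincide. For $n=0$ and $n=1$ there is only one $d-$permutation of that size, so nothing needs proving.

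For the inductive step, let $\cT$ have root $r$ with subtrees $\cT_{\bf f}(r)$ for ${\bf f}\in\mathbb{F}^d$. Write $n_{\bf f}$ for the number of internal nodes of $\cT_{\bf f}(r)$, so $\sum_{\bf f} n_{\bf f}=n-1$.

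First, by Definition \ref{def:perm2tree}, $\cT_{\bf f}(r)=\gamma^d(\bpi^{\bf f})=\gamma^d(\bpi'^{\bf f})$ for every ${\bf f}$. Here $\bpi^{\bf f}$ is understood as a standardized $d-$permutation of size $n_{\bf f}$.

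Second, admissibility is hereditary. The internal nodes of $\cT_{\bf f}(r)$ correspond to the points of $\bpi^{\bf f}$, and standardization preserves relative order on each axis. Hence the block condition of Definition \ref{def:canonical} at every node inside $\cT_{\bf f}(r)$ holds for $\bpi^{\bf f}$, so $\bpi^{\bf f}\in S^{d-1}_{n_{\bf f},{\bf C}}$. Since $n_{\bf f}<n$, the induction hypothesis gives $\bpi^{\bf f}=\bpi'^{\bf f}$ as standardized $d-$permutations.

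Third, we reconstruct the absolute coordinates from the block structure at the root. This is the key step. Fix an axis $l$ and order the directions as in $C_l$. The root point $p_{max,d-1}$ must also be placed in that order. Its $l$-th coordinate exceeds that of every point in the blocks whose direction has $l$-th entry $-$, and is smaller than that of every point in the blocks whose direction has $l$-th entry $+$. Compatibility of $C_l$ places all $-$ directions before all $+$ directions, so the root slots in exactly between these two groups. Concretely, its $l$-th coordinate equals $1+\sum n_{\bf f}$, the sum running over directions ${\bf f}$ with $l$-th entry $-1$. This is determined by $\cT$ alone.

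On axis $l$, the points of the block $\bpi^{\bf f}$ then occupy a consecutive interval of values. That interval is determined by the sizes $n_{\bf g}$ of the blocks preceding ${\bf f}$ in $C_l$, plus one if the root precedes ${\bf f}$. Within the interval, the relative order is given by the standardized $\bpi^{\bf f}$. For the last axis $d-1$, the root has value $n$ and every direction in $\mathbb{F}^d$ is $-$ on that axis, so the same counting applies. The first axis $x_0$ is treated the same way, using $C_0$.

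Thus every coordinate of every point of $\bpi$ is a function of $\cT$ and ${\bf C}$ only, which gives $\bpi=\bpi'$. The main obstacle is making precise that the blocks fill consecutive intervals with the root inserted at the $-/+$ boundary. This is exactly where we need that the blocks are ordered by $C_l$ (Definition \ref{def:canonical}) together with the hyperoctant constraint relative to the root. The rest is bookkeeping on standardizations.
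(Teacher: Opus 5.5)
Your proof is correct, but it takes a different route from the paper's. The paper does not induct on the root decomposition. It takes two admissible $\bpi_A\neq\bpi_B$ with the same max-tree, picks nodes $r_1,r_2$ and an axis $l$ on which the corresponding points $p_{A,i}$, $p_{B,i}$ are ordered differently, and derives a contradiction pair by pair. If one node is an ancestor of the other, say $r_2\in\cT_{\mathbf f}(r_1)$, then Remark~\ref{rem:dir} gives $\mathbf{dir}(p_{A,1},p_{A,2})=\mathbf{dir}(p_{B,1},p_{B,2})=\mathbf f$, so the $l$-th comparison is the same in both permutations; no admissibility is needed there. Otherwise, take the lowest common ancestor $r_3$, with $r_1\in\cT_{\mathbf f_1}(r_3)$ and $r_2\in\cT_{\mathbf f_2}(r_3)$. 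If $p_{A,1}$ lies below $p_{A,2}$ on axis $l$, admissibility of $\bpi_A$ forces $\mathbf f_1$ before $\mathbf f_2$ in $C_l$, while admissibility of $\bpi_B$ forces the reverse. In effect, the paper shows that the order of the points along axis $l$ must be the tree order $<_{C_l,l}$ of Definition~\ref{def:ordertree}. Because this is a statement about pairs only, it needs neither heredity of admissibility nor any standardization. Your argument does need both of those bookkeeping facts, and you handle them correctly: the block condition at nodes inside $\cT_{\mathbf f}(r)$ only involves points of $\bpi^{\mathbf f}$, and since each block fills a consecutive interval on every axis, destandardization is just a shift. In exchange, your argument is constructive. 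The root sits at $1+\sum_{f_l=-}n_{\mathbf f}$, and block $\mathbf f$ is shifted by the sizes of the blocks preceding it in $C_l$, plus one if $f_l=+$. This is a recursive description of the inverse map. Together with compatibility, which guarantees that the shifted blocks land in the correct hyperoctants of the root, it also makes the paper's surjectivity argument explicit. Compatibility is not actually needed for your uniqueness step: the root's $l$-th coordinate is forced simply because exactly the points of the blocks with $f_l=-$ lie below it.
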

\begin{proof}
    Let us consider two $d-$permutations $\bpi_A$ and $\bpi_B$ such that $\gamma^d(\bpi_A)= \gamma^d(\bpi_B)= \cT$. To prove the above lemma, we show that if $\bpi_A$ respects {\bf C} and $\bpi_A \neq \bpi_B $, then $\bpi_B$ cannot respect {\bf C}.

    If $\gamma^d(\bpi_A)= \gamma^d(\bpi_B)= \cT$ and $\bpi_A \neq \bpi_B $, there must exist two internal nodes of $\cT$, that we denote by $r_1$ and $r_2$, such that their corresponding points in $\bpi_A$ (denoted by $p_{A,1}$ and $p_{A,2}$) and the ones in $\bpi_B$ (denoted by $p_{B,1}$ and $p_{B,2}$) satisfy:

    \begin{equation}
    \label{eq:inj}
    \begin{split}
    \pi_{A,l}(p_{A,1}) &< \pi_{A,l}(p_{A,2}) \;, \\ \pi_{B,l}(p_{B,2}) &< \pi_{B,l}(p_{B,1})  \;.
    \end{split}
    \end{equation}
Equation \eqref{eq:inj} implies that $r_1$ cannot be an ancestor of $r_2$ in $\cT$ or the inverse. If $r_1$ was an ancestor of $r_2$, there would be a direction ${\bf f} \in \mathbb{F}_k$ such that $r_2 \in \cT_{\bf f}(r_1)$. However, by Equation \eqref{eq:inj} one cannot have both {\bf dir}$(p_{A,1},p_{A,2})= {\bf f}$ and {\bf dir}$(p_{B,1},p_{B,2})= {\bf f}$. A similar statement holds if $r_2$ is an ancestor of $r_1$.

If $r_1$ and $r_2$ are not in an ancestor-descendant relation in $\cT$, it implies that there must be a third internal node $r_3$ in $\cT$ and two directions ${\bf f_1}$ and ${\bf f_2}$ in $\mathbb{F}_k$ such that  $r_1 \in \cT_{\bf f_1}(r_3)$ and $r_2 \in \cT_{\bf f_2}(r_3)$. Let $p_{A,3}$ and $p_{B,3}$ be the points of $\bpi_{A}$ and $\bpi_{B}$ that correspond to $r_3$, one has then:

\begin{equation}
    \label{eq:inj2}
    \begin{split}
    {\bf dir}(p_{A,3},p_{A,1})= {\bf dir}(p_{B,3},p_{B,1})= {\bf f_1} \;, \\ {\bf dir}(p_{A,3},p_{A,2})= {\bf dir}(p_{B,3},p_{B,2})= {\bf f_2}  \;.
    \end{split}
    \end{equation}
Additionally, if $\bpi_{A}$ respects ${\bf C}$,  one has by the top line of Equation \eqref{eq:inj} that ${\bf f_1}$ precedes ${\bf f_2}$ in $C_l$. However, by the bottom line of Equation \eqref{eq:inj} and by Equation \eqref{eq:inj2}, $\bpi_2$ cannot respect  {\bf C}.
\end{proof}

 This finishes the proof of Theorem \ref{thm:main1}.

\section{Pattern avoiding $d-$permutations and $d-$ary trees}
\label{sec:section2}

In this section, we consider a class of $d-$permutations defined by forbidden patterns and we show that this set is in bijection with $d-$ary trees. 

We first recall some useful definitions on pattern avoidance in $d-$permutations, then define the bijection that gives the main theorem of this paper and prove this bijection.

\subsection{Pattern avoidance and main theorem}

Various definition of pattern avoidance in $d-$permutations have been considered in \cite{AVGUSTINOVICH2024105801,chen2024}. In this work, we use the definitions from~\cite{bonichon2022baxter}.

\begin{definition}
    Let $\textbf{i}=i_1 \dots i_{d'}$ be a sequence of indices in $\{0,\ldots,d\}$, let also $\bsig=(\sigma_1, \dots , \sigma_{d-1})$ be a $d$-permutation of size $n$. The \emph{projection} on $\textbf{i}$ of $\bsig$ is the $d'-$permutation given by $\text{proj}_{\bf i}(\bsig) := (\sigma_{i_2} \sigma_{i_1}^{-1},\dots, \sigma_{i_{d'}} \sigma_{i_1}^{-1}) $. A projection is \emph{direct} if $i_1 < i_2 < \dots < i_{d'}$.
    \label{def:proj}
\end{definition}

\begin{definition}
    Let the $d$-permutation $\bsig = (\sigma_1, \dots , \sigma_{d-1}) \in S^{d-1}_n$ and the $d'-$permutation $\bpi = (\pi_1,\ldots , \pi_{d'-1}) \in S^{d'-1}_k$ with $k \leq n$ and $d'\leq d$. Then \emph{$\bsig$ contains the pattern $\bpi$} if there exists a direct projection $\bsig' = \text{proj}_i(\bsig)$ of dimension $d'$ and a set of indices $c_1 < \ldots < c_k$ s.t. $\sigma'_i(c_1) \ldots \sigma'_i(c_k)$ is order-isomorphic to $\pi_i$ for all $i$. A permutation avoids a pattern if it doesn’t contain it.
    \label{def:avoidance}
\end{definition}

\begin{example}
    Consider the patterns $k_1=\perm{132}{213}$, and $k_2=231$. The $3$-permutation $\bpi=\perm{1432}{3124}$ contains an occurrence of $k_1$, given by the $1^{st},3^{rd}$ and $4^{th}$ points of $\bpi$. As $\text{proj}_{1,2}(\bpi)= 3421$, $\bpi$ also contains an occurrence of $k_2$.
\end{example}

Given a set of patterns $ \bsig_1, \ldots, \bsig_m$, we denote by $S^{d-1}_n( \bsig_1, \ldots, \bsig_m)$ the set of $d-$permutations of size $n$ that avoid these patterns. We now state the main theorem of this paper.

\begin{thm}
    The permutations in $S^{d-1}_n(\treea,\treeb)$ are in bijection with the set of $d-$ary trees with $n$ internal nodes.
    \label{thm:main}
\end{thm}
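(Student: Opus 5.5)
The plan is to identify $\treeset$ with one admissible class $S^{d-1}_{n,{\bf C}_F}$ for a well-chosen set $F\subset\mathbb{F}^d$ of exactly $d$ directions and a well-chosen compatible set of total orders ${\bf C}_F$. Corollary~\ref{cor:bijec} then gives the bijection with $d$-ary trees. So the proof has two inclusions, preceded by the choice of $(F,{\bf C}_F)$, which is dictated by the patterns.

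\emph{Step 1: choosing $F$ from the size-two pattern.} Let $q,p$ be two points with $\pi_{d-1}(p)>\pi_{d-1}(q)$, and let ${\bf f}={\bf dir}(p,q)\in\mathbb{F}^d$. In a direct projection on indices $i<j<k$, an occurrence of $\treea$ is a pair whose sign vector on $(x_i,x_j,x_k)$ is $(+,-,+)$ or $(-,+,-)$. Taking $k=d-1$, where the sign is $-$, avoidance forbids $f_i=-$ and $f_j=+$ with $i<j<d-1$. So the first $d-1$ coordinates of ${\bf f}$ have the form $(+,\dots,+,-,\dots,-)$. This leaves exactly $d$ directions
\[
{\bf f}^{(a)}=(\underbrace{+,\dots,+}_{a},\underbrace{-,\dots,-}_{d-1-a},-),\qquad a=0,\dots,d-1.
\]
I set $F=\{{\bf f}^{(0)},\dots,{\bf f}^{(d-1)}\}$. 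The same computation applied to every pair shows that in a permutation avoiding $\treea$ no node of the max-tree can lie in a subtree of direction ${\bf f}\notin F$ of an ancestor. This is the extra condition in the definition of admissibility for ${\bf C}_F$. I also check that, for sign vectors of this form, the triples $i<j<k<d-1$ impose nothing further.

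\emph{Step 2: choosing ${\bf C}_F$ from $231$.} Fix the root $p=p_{max,d-1}$ of a subpermutation. In the projection on $(x_l,x_{d-1})$, $p$ is the maximum, so avoiding $231$ forces every point left of $p$ on axis $l$ to be below every point right of $p$ in $x_{d-1}$. The subtree with $f_l=-$ lies left of $p$ and the one with $f_l=+$ lies right, which gives compatibility on axis $l$. To get a total order among directions with the same sign on axis $l$, I combine $231$ in the projection $(x_l,x_{d-1})$ with $\treea$ in projections $(x_i,x_l,x_{d-1})$. The goal is to show that two subtrees of directions ${\bf f}^{(a)},{\bf f}^{(b)}$ occupy disjoint intervals of $x_l$, in an order depending only on $a,b,l$. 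I expect the resulting order to be by increasing $a$ on the $-$ side and by decreasing $a$ on the $+$ side, or the reverse; I will derive it on small cases $d=3,4$ first. This defines $C_l$ for each $l$, and these orders are compatible by construction.

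Step 2 is the main obstacle. Interleaving between two blocks on an axis $l$ must be excluded using the fact that one block lies above the other in $x_{d-1}$ whenever they are separated by $p$ on some axis. The case analysis over the pair $(a,b)$ and over $l$ relative to the transition indices $a,b$ is where care is needed. I will handle it by choosing a witness triple made of $p$, a point of each block, and their mutual signs.

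\emph{Step 3: converse, and conclusion.} I show every $\bpi\in S^{d-1}_{n,{\bf C}_F}$ avoids both patterns, by induction on the max-tree decomposition. Any pair of points either lies within one block, and then we recurse, or lies in two different blocks of a common node $r$. In the latter case their relative signs on all axes are given by $C_0,\dots,C_{d-1}$ and the order of the blocks. I verify that these signs never give a sign vector $(+,-,+)$ or $(-,+,-)$ on a direct triple. Similarly, a $231$ occurrence in a projection $(x_i,x_j)$ is excluded by looking at the highest node of the tree separating its three points. Hence $\treeset=S^{d-1}_{n,{\bf C}_F}$ with $|F|=d$, and Corollary~\ref{cor:bijec} yields the bijection with $d$-ary trees with $n$ internal nodes.
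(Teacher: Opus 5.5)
Your architecture matches the paper's. Step~1 is its Lemma~\ref{lem:dir} and produces the same set $F$. Your Step~3 is the contrapositive form of Propositions~\ref{prop:impliA} and~\ref{prop:impliB}, and the conclusion via Corollary~\ref{cor:bijec} is identical.

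The gap is Step~2, which is the core of the argument (Proposition~\ref{prop:impli}). You never fix ${\bf C}_F$, and both orders you propose are wrong. The correct order is the same on every axis, ${\bf f}^{(0)}<{\bf f}^{(1)}<\cdots<{\bf f}^{(d-1)}$, i.e.\ increasing in $a$ on the $-$ side \emph{and} on the $+$ side.

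Your ``reverse'' candidate contradicts your own first observation. Applied with $l=a$, that observation puts the ${\bf f}^{(a)}$-block below the ${\bf f}^{(b)}$-block in $x_{d-1}$ whenever $a<b$, and all of $F$ lies on the $-$ side of axis $d-1$.

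Your first candidate defines a class different from $\treeset$. For $d=3$, take the $3$-permutation $(231,321)$, with points $(1,2,3),(2,3,2),(3,1,1)$. Its root is $(1,2,3)$, its ${\bf f}^{(2)}$-block is $\{(2,3,2)\}$ and its ${\bf f}^{(1)}$-block is $\{(3,1,1)\}$. It is admissible for your first order, since on $x_0$ the ${\bf f}^{(2)}$-block precedes the ${\bf f}^{(1)}$-block. It avoids $\treea$, yet its projection on $(x_0,x_1)$ is $231$.

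The same example shows that the tools you name for Step~2 cannot handle the $+$ side. Its projections on $(x_0,x_2)$ and $(x_1,x_2)$ are $321$ and $132$, so no combination of $231$ in projections $(x_l,x_{d-1})$ with $\treea$ rules it out. Concretely, take blocks $a<b$, points $q_a,q_b$ in them, and a separating axis $l'\in\{a,\dots,b-1\}$, so that $x_{l'}(q_a)<x_{l'}(p)<x_{l'}(q_b)$. If $l<a$ and $x_l(q_b)<x_l(q_a)$, the signs of $q_b-q_a$ on $(x_l,x_{l'},x_{d-1})$ are $(-,+,+)$, which is not an occurrence of $\treea$. Your idea works only on the $-$ side ($b\le l<d-1$), where one gets $(+,-,+)$ on $(x_{l'},x_l,x_{d-1})$.

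The missing idea is to apply $231$ to a projection not involving $x_{d-1}$, with $p$ as the third point. Assume $x_l(q_b)<x_l(q_a)$:
\begin{itemize}
\item if $l<a$ (so $l<l'$), the points $p,q_b,q_a$ in the projection on $(x_l,x_{l'})$ read middle, high, low;
\item if $b\le l$ (so $l'<l$), the points $q_a,p,q_b$ in the projection on $(x_{l'},x_l)$ read middle, high, low.
\end{itemize}
Either way this is an occurrence of $231$, which gives the increasing order.

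With this order established, Step~3 is routine, because two points in distinct blocks of a node are then ordered the same way on every axis. You still need to add the ancestor--descendant pairs (one point being the node itself) to your case split. Step~1 handles these, since no vector of $F$ restricts to $(+,-,+)$ or $(-,+,-)$ on a direct triple.
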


Let us consider $F:=\{\bfdir^0,\ldots, \bfdir^{d-1} \}$, the subset of $\mathbb{F}^d$ defined such that 
\begin{equation}
    \bfdir^i =  \big( \underbrace{+,\ldots,+}_{i}, \underbrace{-,\ldots,-}_{d-i} \; \big) \; .
\label{eq:dirf}
\end{equation} 
On this subset, we consider the compatible set of total orders ${\bf C}_F= C_{F,0},\ldots, C_{F,d-1}$ defined such that:
\begin{itemize}
\item $C_{F,0} = C_{F,1} = \ldots = C_{F,d-1}$ and for two directions ${\bf dir^i}$, ${\bf dir^j}$ in $F$, ${\bf dir^i}$ precedes ${\bf dir^j}$ in $C_l$ if $i<j$.
\end{itemize}

We recall that $\mathbb{T}^{2^{d-1}}_{n}(F)$ is the subset of $\mathbb{T}^{2^{d-1}}_{n}$ defined such that:
$$\mathbb{T}^{2^{d-1}}_{n}(F):=\{ \cT \in \mathbb{T}^{2^{d-1}}_{n}(F): \text{there are only internal nodes of direction } {\bf f} \in F \text{ in } \cT \}$$.


In the next subsection, we will prove Theorem \ref{thm:main} by showing that the permutations of $\treeset$ are the permutations that respect {\bf $C_F$}. As explained in the previous section, as $F$ is a subset of $\mathbb{F}^d$ containing $d$ directions, the $d-$permutations of $\treeset$ are mapped under $\gamma^d$  (in a one to one correspondance) to the trees of $\mathbb{T}^{2^{d-1}}_{n}(F)$ which are themselves in a one to one correspondance with $d-$ary trees.    


Theorem \ref{thm:main} answer the conjectures of \cite{bonichon2022baxter} and \cite{muller2025study} in the affirmative. It also extends the famous bijection between the permutations avoiding a single pattern of size three, and binary trees \cite{knuth1973art}. However, the only $d-$permutation class that seems to have the same enumeration as $S^{d-1}_n(\treea,\treeb)$ is $S^{d-1}_n(\treea,312)$ this stands in constrats with the two dimensional case, where all permutation classes avoiding a pattern of size three are equinumerous. Note also that in arbitrary dimension, the permutations in the bijection presented in this paper are described only by two patterns, one of size two and dimension three and one of size three and dimension two. The simplicity of the characterization by forbidden patterns of the $d-$permutations in this bijection is thus remarkable. 

Since the enumeration of $d-$ary trees is well known, Theorem \ref{thm:main} leads to the following corollary:
\begin{cor}
    Let $|\treeset|$ be the numbers of permutations in $\treeset$, one has
    \begin{equation}
        |\treeset|= \frac{1}{dn+1} \binom{dn+1}{n}\;.
    \end{equation}
\end{cor}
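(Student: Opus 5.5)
The corollary follows by composing bijections and then counting. By Theorem~\ref{thm:main}, $|\treeset|$ equals the number of $d$-ary trees with $n$ internal nodes. Concretely, the bijection goes through $\gamma^d$ restricted to $S^{d-1}_{n,{\bf C}_F}$ (using Theorem~\ref{thm:main1} and Corollary~\ref{cor:bijec} with $F=\{\bfdir^0,\ldots,\bfdir^{d-1}\}$, $|F|=d$). This lands in $\mathbb{T}^{2^{d-1}}_n(F)$, and deleting the leaves of directions outside $F$ produces the $d$-ary trees. So the only remaining task is the classical count of $d$-ary trees. I will do this with a generating function and Lagrange inversion; a cycle-lemma argument would serve equally well.

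Let $T(x)=\sum_{n\ge0} t_n x^n$, where $t_n$ is the number of $d$-ary trees with $n$ internal nodes. A tree is either a single leaf or a root carrying $d$ ordered subtrees, so
\[
T(x)=1+xT(x)^d .
\]
Set $U=T-1$. Then $U=x\phi(U)$ with $\phi(u)=(1+u)^d$, and $\phi(0)\neq 0$, so Lagrange inversion applies. For $n\ge1$ it gives
\[
[x^n]U=\frac1n[u^{n-1}](1+u)^{dn}=\frac1n\binom{dn}{n-1}.
\]
A short binomial manipulation rewrites this as
\[
\frac1n\binom{dn}{n-1}=\frac{1}{dn+1}\binom{dn+1}{n}.
\]
Indeed, $\binom{dn+1}{n}=\frac{dn+1}{n}\binom{dn}{n-1}$. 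For $n=0$ both sides equal $1$, matching the empty permutation and the single leaf.

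If one prefers to avoid Lagrange inversion, the alternative is the cycle lemma on Łukasiewicz words mentioned in the introduction. A $d$-ary tree with $n$ internal nodes has $(d-1)n+1$ leaves, hence $dn+1$ nodes in total. Encode it by its preorder word in letters $d-1$ (internal node) and $-1$ (leaf). Among the $\binom{dn+1}{n}$ words with $n$ letters $d-1$ and $(d-1)n+1$ letters $-1$, exactly one of the $dn+1$ cyclic shifts of each word is a valid Łukasiewicz word. The shifts are all distinct because the length $dn+1$ is coprime with the word's total sum $-1$. This gives the same formula.

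There is no genuine obstacle here. The only point requiring care is confirming that the bijection of Theorem~\ref{thm:main} preserves $n$, with $n$ points corresponding to $n$ internal nodes, and that is immediate from Remark~\ref{rem:dir}.
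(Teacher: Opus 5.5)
Your proposal is correct and follows the paper's route: Theorem~\ref{thm:main} reduces the count to $d$-ary trees with $n$ internal nodes, and the paper then simply cites their Fuss--Catalan enumeration as well known. Your Lagrange-inversion computation, with the cycle-lemma alternative, correctly makes that standard step explicit.
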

\subsection{Proof of the bijection}

Let us now prove that the permutations of $\treeset$ correspond to the permutations respecting  ${\bf C}_F$. Once this is done, the bijection is then proven by Corollary \ref{cor:bijec}.

\begin{lem}
Let $\bpi$ be a $d$-permutation that avoids $\treea$ and let $p_1$ and $p_2$ be two points in $\bpi$ such that $\pi_{d-1}(p_1)> \pi_{d-1}(p_2)$, one has $\bfdir (p_1,p_2) \in F$.
\label{lem:dir}
\end{lem}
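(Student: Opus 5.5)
Fix $p_1,p_2$ with $\pi_{d-1}(p_1)>\pi_{d-1}(p_2)$ and set $\bfdir(p_1,p_2)=(\epsilon_0,\ldots,\epsilon_{d-1})$, where $\epsilon_{d-1}=-$ by assumption. The set $F$ of \eqref{eq:dirf} consists exactly of the sign vectors in $\mathbb{F}^d$ whose signs are non-increasing along the coordinates: a block of $+$'s followed by a block of $-$'s. So the plan is to show that no index $0\le a<b\le d-1$ has $\epsilon_a=-$ and $\epsilon_b=+$. Such a pair forces $b\le d-2$, since $\epsilon_{d-1}=-$. Once this is shown, the vector has the form $(+,\ldots,+,-,\ldots,-)$ with at least one $-$, which is $\bfdir^i$ for some $0\le i\le d-1$.

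The main step is to turn such a pair into an occurrence of $\treea$, using the three coordinates $a<b<d-1$. I would restrict the two points to these coordinates, that is, take the direct projection $\text{proj}_{(a,b,d-1)}(\bpi)$ of Definition \ref{def:proj}. This is a $3$-permutation whose points are ordered by coordinate $x_a$.
\begin{itemize}
\item Since $\epsilon_a=-$, the point $p_2$ comes first in this order and $p_1$ second.
\item Reading coordinate $x_b$ in that order gives $\pi_b(p_2)>\pi_b(p_1)$ (because $\epsilon_b=+$), i.e. the pattern $21$.
\item Reading coordinate $x_{d-1}$ gives $\pi_{d-1}(p_2)<\pi_{d-1}(p_1)$, i.e. the pattern $12$.
\end{itemize}
Hence these two points form an occurrence of $\perm{21}{12}=\treea$ in the sense of Definition \ref{def:avoidance}, which contradicts the hypothesis.

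The one point to check carefully is the convention of Definition \ref{def:proj}. The $i_1$ coordinate acts as the index axis, and the composition $\sigma_{i_j}\sigma_{i_1}^{-1}$ reads coordinate $i_j$ in increasing order of coordinate $i_1$. Under this convention the pattern $\perm{21}{12}$ in projection $(a,b,d-1)$ means exactly the sign relations above. The case $a=0$ needs no separate treatment, since $x_0$ is the index coordinate and the same reading applies.

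Only the avoidance of $\treea$ is used here; the pattern $231$ plays no role in this lemma.
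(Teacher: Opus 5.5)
Your proof is correct. It uses the same basic idea as the paper: argue by contradiction and read an occurrence of $\treea$ off three coordinates of $\bfdir(p_1,p_2)$ whose signs alternate. Where it differs is in which three coordinates you pick, and that choice matters.

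The paper lists two explicit shapes of directions outside $F$. In each it uses a triple starting at axis $0$ with signs $(+,-,+)$, which tacitly assumes the first sign is $+$. This fails for a direction starting with $-$. For example, $(-,+,-)$ with $d=3$ falls in the paper's first case with $i=2$, $j=1$, so $i-j-1=0$. Its displayed inequalities then demand both $\pi_0(p_1)<\pi_0(p_2)$ and $\pi_0(p_2)<\pi_0(p_1)$. Moreover, the two shapes do not literally cover every direction outside $F$; $(+,-,+,-,+,-)$ for $d=6$ is an example.

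You characterize $F$ as the sign vectors in which no $-$ precedes a $+$. You then anchor the triple at the last axis, whose sign is forced to be $-$ by hypothesis. This gives the projection on $(a,b,d-1)$ with signs $(-,+,-)$ uniformly in all cases. It also handles $d=2$ automatically: there $F=\mathbb{F}^2$ and no pair $a<b<d-1$ exists. So your route is the cleaner and more complete version of the paper's argument.
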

\begin{proof}
Suppose $\bfdir (p_1,p_2)$ is not in $F$, then either :
\begin{align*}
    &\text{1. }\bfdir (p_1,p_2)= \big( \underbrace{+,\ldots,+}_{i-j-1},-, \underbrace{+,\ldots,+}_{j},  \underbrace{-,\ldots,-}_{d-i} \; \big) \;, \quad \text{with $j+1\leq i \leq d-1$ and $1 \leq j$} \;, \\
    &\text{2. }\bfdir (p_1,p_2)= \big( \underbrace{+,\ldots,+}_{i},  \underbrace{-,\ldots,-}_{j},+,\underbrace{-,\ldots,-}_{d-i-j-1} \; \big) \;, \quad \text{with $0 \leq i$ and $0 < j<d-i-1$} \;.
\end{align*}
In the first case, one has:
\begin{align*}
    \pi_{0}(p_1) &< \pi_{0}(p_2) \;, \\ \pi_{i-j-1}(p_2) &< \pi_{i-j-1}(p_1)  \;, \\ \pi_{i-j}(p_1) &< \pi_{i-j}(p_2) \;, 
\end{align*}
In the second case, one has:
\begin{align*}
    \pi_{0}(p_1) &< \pi_{0}(p_2) \;, \\ \pi_{i}(p_2) &< \pi_{i}(p_1)  \;, \\ \pi_{i+j}(p_1) &< \pi_{i+j}(p_2) \;, 
\end{align*}
These two configurations correspond to an occurrence of $\treea$ in $\bpi$.
\end{proof}

Lemma \ref{lem:dir} implies that for any $d-$permutation $\bsig \in S_n^{d-1}(\treea)$, one has $\gamma^d(\bsig) \in \mathbb{T}^{2^{d-1}}_{n}(F)$. It is however not clear  at the moment that all trees $\cT$ in $\mathbb{T}^{2^{d-1}}_{n}(F)$ have a $d-$permutation $\bpi \in S_n^{d-1}(\treea)$ such that $\gamma^d(\bpi)= \cT$.

\begin{prop}
If a $d-$permutation $\bpi$ avoids $\treea$ and $\treeb$, then it respects ${\bf C}_F$.
\label{prop:impli}
\end{prop}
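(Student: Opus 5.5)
The plan is to check Definition~\ref{def:canonical} for ${\bf C}_F$ directly on triples of points; the extra condition on directions outside $F$ is already given by Lemma~\ref{lem:dir}.

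\textbf{Setup and reduction.} By Lemma~\ref{lem:dir}, every pair of points of $\bpi$, ordered by decreasing last coordinate, has its direction in $F$. Hence no internal node of $\gamma^d(\bpi)$ hangs from a direction outside $F$. It remains to prove the following. Fix an internal node $r$ with point $p$, and indices $0\le i<j\le d-1$. Let $a$ be a point of the subtree $\cT_{\bfdir^i}(r)$ and $b$ a point of $\cT_{\bfdir^j}(r)$. Then $\pi_l(a)<\pi_l(b)$ for every coordinate $l$.

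Since both subtrees lie below $r$, we have $\bfdir(p,a)=\bfdir^i$ and $\bfdir(p,b)=\bfdir^j$. Both $a$ and $b$ lie below $p$ in $x_{d-1}$. For $i\le l<j$ we get $\pi_l(a)<\pi_l(p)<\pi_l(b)$, so these coordinates are automatic, and in particular $\pi_i(a)<\pi_i(b)$. The work is to control the coordinates $l<i$ and $l\ge j$. The only tool I use is Lemma~\ref{lem:dir} applied to the pair $\{a,b\}$, combined with a $231$ pattern in a direct two-dimensional projection of the triple $\{p,a,b\}$.

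\textbf{Case 1: $\pi_{d-1}(b)>\pi_{d-1}(a)$.} Write $\bfdir(b,a)=\bfdir^k$. Since $\pi_i(a)<\pi_i(b)$, coordinate $i$ of this direction is $-$, so $k\le i$. Then $a<b$ in every coordinate $\ge k$, and it suffices to exclude $k>0$.

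Suppose $k>0$. Then $i>0$, so $\pi_0(p)<\pi_0(b)<\pi_0(a)$. Take any $l$ with $i\le l<j$; it satisfies $l>0$, so the projection on $(0,l)$ is direct. Reading the three points by increasing $x_0$ (order $p,b,a$), the $x_l$-values are $\pi_l(p)$, $\pi_l(b)$, $\pi_l(a)$, which are respectively middle, high and low. This is an occurrence of $231$, a contradiction.

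\textbf{Case 2: $\pi_{d-1}(a)>\pi_{d-1}(b)$.} Write $\bfdir(a,b)=\bfdir^k$. Since $b>a$ in all coordinates of $[i,j)$, we get $k\ge j$, and $b>a$ in all coordinates $<k$. We need $k=d-1$, so suppose $k<d-1$.

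In coordinate $k\ge j$, both points lie below $p$ and $\pi_k(b)<\pi_k(a)$. Use the direct projection on $(i,k)$, noting $i<j\le k$. Reading by increasing $x_i$ (order $a,p,b$), the $x_k$-values are $\pi_k(a)$, $\pi_k(p)$, $\pi_k(b)$, which are middle, high and low. Again this is a $231$, a contradiction.

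\textbf{Conclusion and main difficulty.} In both cases all coordinates satisfy $\pi_l(a)<\pi_l(b)$, so $\bpi$ respects ${\bf C}_F$. The delicate step is choosing, in each case, the right pair of axes for the projection: it must be direct (first index smaller), and the middle point in the first axis must be the highest in the second. Getting this choice right is where the care lies; the rest is the sign bookkeeping from the shape of the directions $\bfdir^k$.
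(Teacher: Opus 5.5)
Your Case 1 is correct, but Case 2 ends with a conclusion that does not give admissibility. After excluding $k<d-1$ you are left with $\bfdir(a,b)=\bfdir^{d-1}=(+,\ldots,+,-)$. That means $a<b$ on axes $0,\ldots,d-2$, but $\pi_{d-1}(b)<\pi_{d-1}(a)$, which is just the Case 2 hypothesis restated. The order ${\bf C}_F$ constrains the last axis as well; your own target is $\pi_l(a)<\pi_l(b)$ for \emph{every} $l$. This is in fact the substantive part of the condition. For $d=2$ it is exactly the classical fact that in a $231$-avoider $\pi_L\,n\,\pi_R$ every value of $\pi_L$ lies below every value of $\pi_R$, while the $x_0$-order is automatic. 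So your closing claim that ``in both cases all coordinates satisfy $\pi_l(a)<\pi_l(b)$'' is false for $l=d-1$ in Case 2. What you actually have to show is that Case 2 never occurs.

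The repair is your own Case 2 argument with axis $d-1$ in place of axis $k$, and Lemma~\ref{lem:dir} is not needed there. Since $i<j\le d-1$, the projection on $(i,d-1)$ is direct. Ordering by $x_i$ gives $a,p,b$, because $\pi_i(a)<\pi_i(p)<\pi_i(b)$. Their last coordinates are middle, high, low: $p$ is the maximum of the subpermutation containing $a$ and $b$, and $\pi_{d-1}(b)<\pi_{d-1}(a)$ by hypothesis. This is a $231$, so Case 2 is impossible, and Case 1 (with $k=0$) then gives the full statement.

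This repaired step is precisely the paper's second item with $n=d-1$. The paper proceeds coordinate by coordinate:
\begin{itemize}
\item axes where $\bfdir^i$ and $\bfdir^j$ differ are automatic;
\item axes where both signs are $-$ or both are $+$ are settled by a $231$ in a direct projection pairing that axis with an axis of $[i,j)$.
\end{itemize}
Your use of Lemma~\ref{lem:dir} on the pair $\{a,b\}$ instead reduces everything to the two extreme axes $0$ and $d-1$. That is a tidy reorganisation, once the last axis is handled correctly.
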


\begin{proof}

Let $\cT$ be the max-tree of $\bpi$ with respect to the axis $d-1$. By Lemma \ref{lem:dir}, there are no points $p_1$ and $p_2$ such that, $\pi_{d-1}(p_1)< \pi_{d-1}(p_2)$ and ${\bf dir}(p_2,p_1) \notin F$. We thus have by Remark \ref{rem:dir} that there are no internal nodes $r_2$ and $r_1$ such that $r_1 \in \cT_{\bf f}(r_2)$ with ${\bf f} \notin F$.

Let $p_a$, $p_b$ and $p_c$ be three points in $\bpi$ such that their corresponding internal nodes  $r_a$, $r_b$ and $r_c$ in $\cT$ satisfy $r_a \in \cT_{\bf dir^i}(r_c)$ and $r_b \in \cT_{\bf dir^j}(r_c)$ with $i<j$. The implication is then proven by showing that:
\begin{enumerate}
    \item $\pi_l(p_a)<\pi_l(p_c)<\pi_l(p_b)$, \quad if $\bfdir^i_l \neq \bfdir^j_l $.
    \item $\pi_n(p_a)<\pi_n(p_b)<\pi_n(p_c)$, \quad if $\bfdir^i_n = \bfdir^j_n = -$.
    \item $\pi_m(p_c)< \pi_m(p_a)< \pi_m(p_b)$, \quad if $\bfdir^i_m = \bfdir^j_m = +$. 
\end{enumerate}

\medskip

By remark \ref{rem:dir}, if $r_a \in \cT_{\bf dir^i}(r_c)$ and $r_b \in \cT_{\bf dir^j}(r_c)$, one must have that {\bf dir}$(p_c,p_a)= {\bf dir}^i$ and {\bf dir}$(p_c,p_b)= {\bf dir}^j$ in $\bpi$. Let us now prove this item by item.

\textbf{ First item:} By equation \eqref{eq:dirf}, if $\bfdir^i_l \neq \bfdir^j_l $ and $i<j$ one has $\bfdir^i_l=-$ and $\bfdir^j_l=+$. This implies that $\pi_l(p_c)<\pi_l(p_b)$ and $\pi_l(p_a)<\pi_l(p_c)$ and thus that $\pi_l(p_a)<\pi_l(p_c)<\pi_l(p_b)$. 

\medskip
\textbf{ Second item:} If $\bfdir^i_n = \bfdir^j_n = -$, one has either:
\begin{align*}
    &\text{1. } \pi_n(p_b)<\pi_n(p_a)<\pi_n(p_c) \;, \\
    &\text{2. } \pi_n(p_a)<\pi_n(p_b)<\pi_n(p_c) \;.
\end{align*}
Additionally, from the first item and by equation \eqref{eq:dirf}, there exists also an $l<n$ such that $\pi_l(p_a)<\pi_l(p_c)<\pi_l(p_b)$. The configuration $1.$ thus leads to an occurance of $\treeb$.

\textbf{Third item:} If $\bfdir^i_m = \bfdir^j_m = +$, one has either:
\begin{align*}
    &\text{a. } \pi_m(p_c)<\pi_m(p_a)<\pi_m(p_b) \;, \\
    &\text{b. } \pi_m(p_c)<\pi_m(p_b)<\pi_m(p_a) \;.
\end{align*}
Additionally, from the first item and by equation \eqref{eq:dirf}), there exists also an $l>m$ such that $\pi_l(p_a)<\pi_l(p_c)<\pi_l(p_b)$. The configuration $b.$ thus leads to an occurrence of $\treeb$.
\end{proof}

\begin{prop}
If a $d-$permutation $\bpi$ contains an occurrence of $\treea$ it does not respect ${\bf C}$.
\label{prop:impliA}
\end{prop}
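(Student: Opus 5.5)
The plan is to argue by contraposition on the structure of the max-tree: an occurrence of $\treea$ produces two points whose mutual direction is not in $F$, and I will show that such a pair is incompatible with admissibility with respect to ${\bf C}_F$ (including the extra condition that no internal node lies in a subtree of direction outside $F$).

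\textbf{Step 1: the occurrence gives a direction outside $F$.} An occurrence of $\treea$ consists of two points $p,q$ and indices $i_1<i_2<i_3$ such that, if $q$ is after $p$ along $x_{i_1}$, then $q$ is below $p$ along $x_{i_2}$ and above $p$ along $x_{i_3}$. Let $p_1$ be the one of the two points with larger $x_{d-1}$-coordinate and $p_2$ the other, so that ${\bf dir}(p_1,p_2)\in\mathbb{F}^d$. Restricted to the coordinates $(i_1,i_2,i_3)$, the sign vector of ${\bf dir}(p_1,p_2)$ is either $(+,-,+)$ or $(-,+,-)$. The directions $\bfdir^i$ of \eqref{eq:dirf} are exactly those whose sign sequence is non-increasing (a block of $+$ followed by a block of $-$). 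Both patterns contain a $-$ followed later by a $+$, so ${\bf dir}(p_1,p_2)\notin F$. This is the same observation as in Lemma \ref{lem:dir}, read in the converse direction.

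\textbf{Step 2: case analysis in $\cT=\cT_{max}(\bpi)$.} Let $r_1,r_2$ be the nodes of $p_1,p_2$. Since $p_1$ is higher along $x_{d-1}$, $r_2$ cannot be an ancestor of $r_1$.

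If $r_1$ is an ancestor of $r_2$, then by Remark \ref{rem:dir} we have $r_2\in\cT_{\bf f}(r_1)$ with ${\bf f}={\bf dir}(p_1,p_2)\notin F$. This violates the extra condition of admissibility with respect to ${\bf C}_F$.

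Otherwise, let $r_3$ be their lowest common ancestor, with $r_1\in\cT_{{\bf f}_1}(r_3)$, $r_2\in\cT_{{\bf f}_2}(r_3)$ and ${\bf f}_1\neq{\bf f}_2$. If ${\bf f}_1$ or ${\bf f}_2$ is not in $F$, we are again done. So assume ${\bf f}_1=\bfdir^a$ and ${\bf f}_2=\bfdir^b$ with $a\neq b$.

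\textbf{Step 3: the key use of ${\bf C}_F$.} All the orders $C_{F,l}$ coincide, for every axis $l$ including $l=0$. Hence admissibility forces every point of the subtree attached to the smaller index to be smaller, in every coordinate simultaneously, than every point of the other subtree. There are two subcases.
\begin{itemize}
\item If $a<b$, then $p_2$ exceeds $p_1$ in all coordinates, in particular in $x_{d-1}$, contradicting the choice of $p_1$.
\item If $a>b$, then $p_2$ is below $p_1$ in all coordinates, so ${\bf dir}(p_1,p_2)=(-,\ldots,-)=\bfdir^0\in F$, contradicting Step 1.
\end{itemize}
Either way, $\bpi$ does not respect ${\bf C}_F$.

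The only delicate point I expect is Step 3. One must use that the block condition in Definition \ref{def:canonical} holds on every axis with the same order, so that the two points are comparable coordinatewise. This uniformity is exactly what makes their direction $(-,\ldots,-)$ or impossible, whereas Step 1 requires a direction that is non-monotone.
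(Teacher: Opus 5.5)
Your proof is correct and follows essentially the same route as the paper. The occurrence of $\treea$ forces a direction outside $F$, which rules out the ancestor--descendant configuration; in the common-ancestor configuration, the identical orders $C_{F,l}$ make the two points comparable in every coordinate, which contradicts the mixed signs of the occurrence. The only difference is cosmetic: the paper reaches the contradiction directly from the coordinates $j<k$ of the occurrence, whereas you split on $a<b$ versus $a>b$ and use the choice of $p_1$ together with Step 1.
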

\begin{proof}
    Let $\cT$ be the max-tree of $\bpi$ with respect ot the axis $d$. If $\bpi$ contains an occurrence of $\treea$ there are two points $p_a$ and $p_b$ such that
    $$\pi_i(p_a)<\pi_i(p_b)\;,$$$$ \pi_j(p_b)<\pi_j(p_a)\;,$$$$\pi_k(p_a)<\pi_k(p_b)\;,$$
    for some $i<j<k$.
    Let $r_a$ and $r_b$ be the corresponding points of $p_a$ and $p_b$ in $\cT$. There are then three possibles configurations:
    \begin{enumerate}
        \item a) $r_a \in \cT_{\bf f}(r_b)$ or b) $r_b \in \cT_{\bf f}(r_a)$ for some ${\bf f}$ in $\mathbb{F}_k$.
        \item There exists a third internal node $r_c$ in $\cT$ such that $r_a \in \cT_{\bf f_1}(r_c)$ and  $r_b \in \cT_{\bf f_2}(r_c)$. for some directions ${\bf f_1}\neq {\bf f_2}$ in $\mathbb{F}_k$. We denote by $p_c$ the point in $\bpi$ associated with this internal node.
    \end{enumerate}
    The subtree induced by these configurations are illustrated in Figure \ref{fig:illustr_rarb}.

    In the three configurations if {\bf f} or ${\bf f_1}$ or ${\bf f_2}$ is not in $F$ then $\bpi$ doesn't respect ${\bf C}$.

    Due to the occurrence of $\treea$, one has ${\bf dir}(p_a,p_b)= (\ldots, +,\ldots,-,\ldots,+,\ldots)$ (and inversely ${\bf dir}(p_b,p_a)$ is of the form ${\bf dir}(p_b,p_a)=(\ldots, -,\ldots,+,\ldots,-,\ldots)$. This implies that in the first configuration, ${\bf f }$ is not in $F$ and that $\bpi$ thus doesn't respect ${\bf C}$.
    
    In the second configuration, one must have that ${\bf f_1} \in F$ and ${\bf f_2} \in F$. Since ${\bf f_1}\neq {\bf f_2}$ there exist an $n$ such that $$ a) \quad  \pi_n(p_a)<\pi_n(p_c) < \pi_n(p_b) \quad \text{ or } \quad b) \quad \pi_n(p_b)<\pi_n(p_c) < \pi_n(p_a) \;.$$ One has then ${\bf f_1}={\bf dir}^{p}$ and ${\bf f_2}={\bf dir}^{q}$ with $p<q$ in case $(a)$ and $q<p$ in case $(b)$. In case $(a)$ one must then have $\pi_l(p_a)<\pi_l(p_b)$ for all coordinates $l$ if $\bpi$ respects {\bf C}. Similarly in case $(b)$ one must have $\pi_l(p_b)<\pi_l(p_a)$ for all coordinates $l$ if $\bpi$ respects {\bf C}. However, since one has $\pi_j(p_b)<\pi_j(p_a)$ and $\pi_k(p_a)<\pi_k(p_b)$ both cases do not respect ${\bf C}$.
\end{proof}

\begin{prop}
Let $\bpi$ be a $d-$permutation that avoids $\treea$, if it  contains an occurrence of $\treeb$ it does not respect ${\bf C}$.
\label{prop:impliB}
\end{prop}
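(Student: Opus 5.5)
We argue by contradiction. Suppose $\bpi$ avoids $\treea$, contains $\treeb$, and respects ${\bf C}_F$. An occurrence of $\treeb$ is a direct projection onto two axes $i<k$ together with three points $p_1,p_2,p_3$. Ordered by increasing $x_i$, their $x_k$-values are order-isomorphic to $231$. We denote the points so that $\pi_i(p_1)<\pi_i(p_2)<\pi_i(p_3)$ and $\pi_k(p_3)<\pi_k(p_1)<\pi_k(p_2)$.

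\textbf{Step 1: locate the triple in the tree.} Since $\bpi$ avoids $\treea$, Lemma~\ref{lem:dir} applies: for any two points, the direction from the one with larger $x_{d-1}$ to the other lies in $F$. Hence $\gamma^d(\bpi)\in\mathbb{T}^{2^{d-1}}_n(F)$. A useful consequence is that every pair of points is comparable or reverse-comparable in a structured way. If $\bfdir(p,q)=\bfdir^s$, then $q$ is smaller than $p$ on axes $s,\ldots,d-1$ and larger on axes $0,\ldots,s-1$. Let $r_c$ be the lowest common ancestor in $\cT$ of the three nodes $r_1,r_2,r_3$, with point $p_c$.

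\textbf{Step 2: case analysis at $r_c$.} Either $r_c$ is one of $r_1,r_2,r_3$, or the three nodes are distributed among at least two distinct subtrees $\cT_{\bfdir^s}(r_c)$.

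In the separated case, admissibility with respect to ${\bf C}_F$ forces blocks on every axis in the same order $\bfdir^0<\cdots<\bfdir^{d-1}$. So two points in different subtrees $\bfdir^s,\bfdir^t$ with $s<t$ are ordered the same way on \emph{every} axis. The pair of the $231$ pattern that is ordered oppositely on axes $i$ and $k$ cannot lie in two different subtrees. Concretely, $p_3$ is inverted relative to both $p_1$ and $p_2$, so $p_3$ lies in the same child subtree as $p_1$ and as $p_2$. That contradicts the choice of $r_c$ as the lowest common ancestor, unless $r_c$ is itself one of the points.

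So we reduce to the case where the LCA is one of the points, say $r_c=r_t$. The other two nodes lie in subtrees $\bfdir^s$ and $\bfdir^{s'}$ of $r_t$. If $s\ne s'$, the block condition gives the relative order of those two points on all axes. The sign patterns $\bfdir^s,\bfdir^{s'}$ give their order relative to $p_t$. We then check the three choices $t\in\{1,2,3\}$ against the $231$ constraints on axes $i<k$. The key fact is that a direction $\bfdir^s$ has $+$ on the lower axes and $-$ on the higher axes. Hence the pattern ``larger on axis $i$, smaller on axis $k$'' is allowed, and ``smaller on $i$, larger on $k$'' is forbidden. If $s=s'$, both points lie in the same child subtree, and we recurse: we apply the same argument within $\cT_{\bfdir^s}(r_t)$ to the pair and $p_t$. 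Equivalently, we induct on the size of the permutation by choosing the minimal counterexample.

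\textbf{Main obstacle.} The hard step is the sub-case where the LCA is one of the three points. There we must show that the sign constraints from $\bfdir^s$, combined with the ordering imposed by the order $C_{F,i}=C_{F,k}$, force one of $\pi_k(p_3)<\pi_k(p_1)$ or $\pi_i(p_1)<\pi_i(p_2)$ to fail. I would handle it by first checking $t=2$: $p_2$ has the largest $x_k$ and the middle $x_i$. If $p_2$ were the top point in $x_{d-1}$, the direction from $p_2$ to $p_1$ would need $-$ on $i$ and $-$ on $k$, while the direction from $p_2$ to $p_3$ would need $+$ on $i$ and $-$ on $k$. These two lie in different $\bfdir^s$-subtrees with $s$ indices ordered so that the block order on axis $k$ puts $p_1$ before $p_3$, contradicting $\pi_k(p_3)<\pi_k(p_1)$. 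The cases $t=1$ and $t=3$ would be treated by the analogous comparison.
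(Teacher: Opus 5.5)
Your argument is essentially the paper's proof; organising the cases around the lowest common ancestor of the triple is tidier bookkeeping than the paper's list of configurations (1)--(4), not a different argument. The same three facts do all the work:
(i) since $C_{F,0}=\cdots=C_{F,d-1}$, two points in different subtrees of a common node are ordered the same way on every axis, so an inverted pair such as $\{p_1,p_3\}$ or $\{p_2,p_3\}$ is never split at a branching node;
(ii) $r_3$ cannot be an ancestor of $r_1$, because $\bfdir(p_3,p_1)$ would be $-$ on axis $i$ and $+$ on axis $k$, which no $\bfdir^s$ is;
(iii) when $r_2$ or $r_1$ is on top, the block order contradicts the $\treeb$ inequalities.
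For (iii) with $t=1$, which you left as ``analogous'', write $\bfdir(p_1,p_q)=\bfdir^{s_q}$. Then $s_3\le k<s_2$, so $p_3$'s block precedes $p_2$'s, giving $\pi_i(p_3)<\pi_i(p_2)$. Note that $t=3$ is not a block comparison at all; it is excluded outright by (ii).

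The one step that fails as written is your treatment of $s=s'$. You cannot ``recurse within $\cT_{\bfdir^s}(r_t)$''. The point $p_t$ is not in the subpermutation $\bpi^{\bfdir^s}$, so the triple is not an occurrence of $\treeb$ there, and a minimal-counterexample induction says nothing about it. What is true, and what the paper uses (via $\bfdir(p_B,p_A)\neq\bfdir(p_B,p_C)$ and $\bfdir(p_A,p_B)\neq\bfdir(p_A,p_C)$), is that this case never occurs. By Remark~\ref{rem:dir}, two nodes lie in the same subtree of $r_t$ only if they have the same direction from $p_t$. But $p_1$ and $p_3$ lie on opposite sides of $p_2$ on axis $i$, $p_2$ and $p_3$ lie on opposite sides of $p_1$ on axis $k$, and $t=3$ is already excluded by (ii). Your own $t=2$ computation in fact observes this. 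Replace the recursion by this remark and the proof is complete.
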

\begin{proof}
    Let $\cT$ be the max-tree of $\bpi$ with respect to the axis $d$. If $\bpi$ contains an occurrence of $\treeb$ there are then three points $p_a$, $p_b$ and $p_c$ such that

 \begin{equation}
    \label{eq:patt_occ}
    \begin{split}
    \pi_i(p_A)<\pi_i(p_B)< \pi_i(p_C)\;, \\\pi_j(p_C)<\pi_j(p_A)<\pi_j(p_B)\;,
    \end{split}
    \end{equation}
    for some $i<j$. Additionally, one has to consider both the case when $j=d$ and when $j<d$.
    Let $r_A$, $r_B$ and $r_C$ be the internal nodes in $\cT$ corresponding to $p_A$, $p_B$ and $p_C$. Since $\bpi$ avoids $\treea$ there are only internal nodes whose directions are in $F$ in $\cT$. One can distinguish four possible configurations of the nodes $r_A, r_B$ and $r_C$ in $\cT$:
    \begin{enumerate}
        \item $r_A \in \cT_{\bf dir^i}(r_B)$ and $r_C \in \cT_{\bf  dir^j}(r_B)$ for some $i\neq j$, or other permutations of $A,B$ and $C$.
        \item $r_A \in \cT_{\bf dir^i}(r_B)$ and $r_C \in \cT_{\bf  dir^j}(r_A)$ with possibly $i=j$, or other permutations of $A,B$ and $C$.
        \item There exists a node $r_D$ in $\cT$ such that $r_A  \in \cT_{\bf dir^i}(r_D)$, $r_B  \in \cT_{\bf dir^j}(r_D)$ and  $r_C  \in \cT_{\bf dir^k}(r_D)$ with $i \neq j \neq k$.
        \item There exists a node $r_D$ in $\cT$ such that $r_A  \in \cT_{\bf dir^i}(r_D)$, $r_B  \in \cT_{\bf dir^j}(r_D)$ and  $r_C  \in \cT_{\bf dir^k}(r_B)$ with $i \neq j$ and $k$ possibly equal to $i$ or $j$, or other permutations of $r_A$,$r_B$ and $r_C$.
    \end{enumerate}

Each configuration is illustrated in Figures \ref{fig:configs1} and \ref{fig:configs2}.

We prove here that any of these configurations is either not possible or  do not respect ${\bf C}$.

In any configuration, if $r_A\in \cT_{\bf dir^i}(r_C) $ for any ${\bf dir}^i \in F$, one must have that $j< d-1$ and that $\pi_{d-1}(p_A)< \pi_{d-1}(p_C)$ which gives an occurrence of $\treea$ with $p_a$ and $p_C$. Configurations $(1.c)$, $(2.ca)$, $(2.cb)$, $(2.bc)$, and $(4.ca)$ can thus be straightforwardly ruled out as $\bpi$ avoids $\treea$. A similar statement holds if $r_b\in \cT_{\bf dir^i}(r_C)$ for any ${\bf dir}^i \in F$. One can thus also rule out configurations $(2.ac)$ and $(4.cb)$. 

Aditionally, if there exist a point $p_E$ (which can be possibly be the point $p_B$) such that $r_A \in \cT_{\bf dir^x}(r_E)$ and $r_C \in \cT_{\bf  dir^y}(r_E)$ with $x \neq y$, one must have either that $\pi_l(p_A)<\pi_l(p_E)< \pi_l(p_C)$ or $\pi_l(p_C)<\pi_l(p_E)< \pi_l(p_A)$ for some coordinate $l$. In the first case one has $x<y$ and in the second case one has $x>y$. Then, the second line of Equation \ref{eq:patt_occ} implies that the first case do not respect {\bf C} while the first line of Equation \ref{eq:patt_occ} implies that the second case do not respect ${\bf C}$. This implies that configurations $(1.b)$, $(3)$, $(4.ab)$, $(4.ba)$ and $(4.bc)$ can be ruled out. Similarly, if there exist a point $p_E$ (which can be possibly be the point $p_A$) such that $r_B \in \cT_{\bf dir^x}(r_E)$ and $r_C \in \cT_{\bf  dir^y}(r_E)$ with $x \neq y$, one must have either that $\pi_l(p_C)<\pi_l(p_E)< \pi_l(p_B)$ or $\pi_l(p_B)<\pi_l(p_E)< \pi_l(p_C)$ for some coordinate $l$. In the first case one that $y<x$ and in the second case one has that $x<y$. As before, the second and first lines of Equation \ref{eq:patt_occ} implie that the first case  and the second case respectively do not respect ${\bf C}$. This rules out configurations $(1.a)$ and $(4.ac)$.


Finally, because of the first line of equation \ref{eq:patt_occ}, one has {\bf dir}$(p_B$,$p_A) \neq {\bf dir}(p_B$,$p_C)$ which implies that one cannot have  $r_A \in \cT_{\bf dir^i}(r_B)$ and $r_C \in \cT_{\bf  dir^i}(r_B)$. This rules out configurations $(2.ba)$. Similarly, because of the second line of equation \ref{eq:patt_occ}, one has {\bf dir}$(p_A$,$p_B) \neq {\bf dir}(p_A$,$p_C)$ which implies that one cannot have  $r_C \in \cT_{\bf dir^i}(r_A)$ and $r_B \in \cT_{\bf  dir^i}(r_A)$. This rules out configurations $(2.ab)$.

We thus have that if the points $p_a$, $p_b$ and $p_c$ are an occurrence of $\treeb$ in $\bpi$, all possible configurations of the points $r_a$, $r_b$ and $r_c$ in $\cT$ either do not respect ${\bf C}$ or are not possible configurations, which conclude the proof. 

\end{proof}

By Propositions \ref{prop:impli}, \ref{prop:impliA} and \ref{prop:impliB} we obtain the following Lemma which conlude the proof.

\begin{lem}
    The set $\treeset$ corresponds to the set of $d-$permutations that respect ${\bf C}$.
    \label{lem:respect_eq}
\end{lem}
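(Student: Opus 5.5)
Lemma \ref{lem:respect_eq} is a double inclusion, and both directions are already supplied by the three propositions above. I would write $A=\treeset$ and let $B$ be the set of $d-$permutations of size $n$ that respect ${\bf C}_F$, meaning admissible with respect to ${\bf C}_F$ in the sense of Definition~\ref{def:canonical} together with the extra requirement that no internal node lies in a subtree of direction outside $F$. The goal is then $A=B$.

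\textbf{The inclusion $A\subseteq B$.} Let $\bpi \in A$. Then $\bpi$ avoids $\treea$, so Lemma~\ref{lem:dir} together with Remark~\ref{rem:dir} shows that every internal node of $\gamma^d(\bpi)$ hangs in a subtree of direction in $F$. Proposition~\ref{prop:impli} then gives the block-ordering condition for every internal node $r_c$ and every pair ${\bf dir}^i,{\bf dir}^j$ with $i<j$. Its three items cover each coordinate $l$: if the two directions differ at $l$, the points of $\cT_{{\bf dir}^i}(r_c)$ precede those of $\cT_{{\bf dir}^j}(r_c)$; if they agree, items 2 and 3 show that the pattern $\treeb$ forces the same order. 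Since $C_{F,l}$ orders ${\bf dir}^i$ before ${\bf dir}^j$ exactly when $i<j$ on every axis, this is the required block order. Hence $\bpi\in B$.

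\textbf{The inclusion $B\subseteq A$, by contraposition.} Let $\bpi\notin A$. If $\bpi$ contains $\treea$, Proposition~\ref{prop:impliA} shows that $\bpi$ does not respect ${\bf C}$. Otherwise $\bpi$ avoids $\treea$ but contains $\treeb$, and Proposition~\ref{prop:impliB} applies, since its hypothesis is precisely avoidance of $\treea$. In both cases $\bpi\notin B$.

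\textbf{Main obstacle.} The delicate step is bookkeeping rather than new mathematics. I need to check that ``respects ${\bf C}$'' in Propositions~\ref{prop:impliA} and~\ref{prop:impliB} means the ${\bf C}_F$ just defined, including the condition that subtrees outside $F$ contain no internal nodes. I also need to check that the order relations derived there match the definition of $C_{F,l}$ on every axis, in particular for axes where ${\bf dir}^i$ and ${\bf dir}^j$ coincide. Once these identifications are made, the lemma follows immediately from the two inclusions. Corollary~\ref{cor:bijec} then gives Theorem~\ref{thm:main}.
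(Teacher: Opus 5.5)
Your proposal is correct and follows the paper's own argument: the paper obtains Lemma~\ref{lem:respect_eq} by using Proposition~\ref{prop:impli} for one inclusion and Propositions~\ref{prop:impliA} and~\ref{prop:impliB} for the contrapositive of the other, with the same case split on whether $\treea$ is contained. You read ``respects ${\bf C}$'' as admissibility with respect to ${\bf C}_F$, including the condition that subtrees of directions outside $F$ carry no internal nodes; this is the reading the paper implicitly relies on, for instance in the first configuration of Proposition~\ref{prop:impliA}.
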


\section{Conclusion}
\label{sec:Conclusion}

In this paper, we generalized the notion of max-tree of a permutation to $d-$permutations. This construction induces a mapping from $d-$permutations to $2^{d-1}-$ary trees. We then showed that, given a compatible set of total orders on $\mathbb{F}^d$, one can define an \emph{admissible} permutation class for which this mapping is bijective. This yields a bijection between these admissible classes and $2^{d-1}-$ary trees. 

By restricting this general bijection, we answered a conjecture formulated in \cite{bonichon2022baxter} and \cite{muller2025study}. More precisely, we proved that the $d-$permutation avoiding $\treea$ and $\treeb$ are in bijection with $d-$ary trees which also solves the enumeration problem for this class of $d-$permutations.

As follow-up of this work, several open problems appear interesting to us. 

\paragraph{Question 1:}
The bijection between $\treeset$ and the set of $d-$ary trees is obtained by constructing the max-trees with respect to the last axis of the permutation in $\treeset$. As mentioned in Section \ref{sec:section1}, a similar construction can be defined using any other axis of a $d-$permutation. For each such axis, one can similarly define admissible classes of $d-$permutations by considering total orderings of the corresponding negative directions.

In this context, a natural open question is whether one can characterize, in terms of forbidden patterns, other classes of $d-$permutations that are in bijection with $d-$ary trees through a construction based on an axis different from the last axis ? 

\paragraph{Question 2:}
The bijection between $\treeset$ and the set of $d-$ary trees arises as a restriction of the general mapping $\gamma^d$ to $d-$permutations whose max-trees avoid certain directions. This restriction allows the arity of the trees to match the dimension of the permutations. A second open question is whether there exists a more general class of $d-$permutations, in bijection with $2^{d-1}-$ary trees that can be characterized in terms of forbidden patterns.

\paragraph{Question 3:}
In the construction of the max-trees of a $d-$permutation, one considers the maximal point of the permutation with respect to the last axis. One may instead consider the dual construction based on the minimal point of a $d-$permutation, leading to the notion of min-tree. In the classical case of permutations, the pair (min-tree, max-tree) is known to be a pair of twin binary trees. Moreover, such pairs of trees are known to be in bijection with Baxter permutations.

This raises the following question: can analogous phenomena be identified in the higher-dimensional setting ? In particular, can we identify structural conditions on the pair (min-tree, max-tree) of a $d-$permutation and can we identify permutations classes that are in bijection with these pairs of trees ? 

\paragraph{Question 4:}
Using the inverse mapping described in appendix \ref{sec:tree2perm}, one can generate uniformly large random $d-$permutations (of a given size) in $\treeset$ and more generally $d-$permutations that are admissible with respect to a compatible set of total orders. Examples of such large $3-$permutations are shown in Figure \ref{fig:random_231} for $\treeset$ and in Figure \ref{fig:random_order} for a $3-$permutation admissible with the compatible set of total order ${\bf C_{ex}}=C_{ex,0}, C_{ex,1}, C_{ex,2}$ where 
$$ C_{ex,0}: (-+-)< (---) < (++-) < (+--) , \quad  C_{ex,2}: (---)< (+--) < (-+-) < (++-) ,$$
$$ C_{ex,2}: (---)< (++-) < (+--) < (-+-) .$$

It would thus be interesting to study the scaling limits of such classes of $d-$permutations as it was done in \cite{borga2025} for the separable $d-$permutations.

\begin{figure}[!htb]
        \center{ \minipdf{0.40}{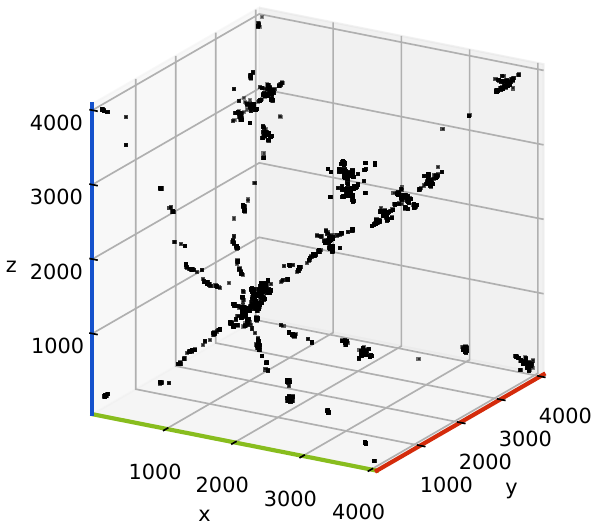} \minipdf{0.40}{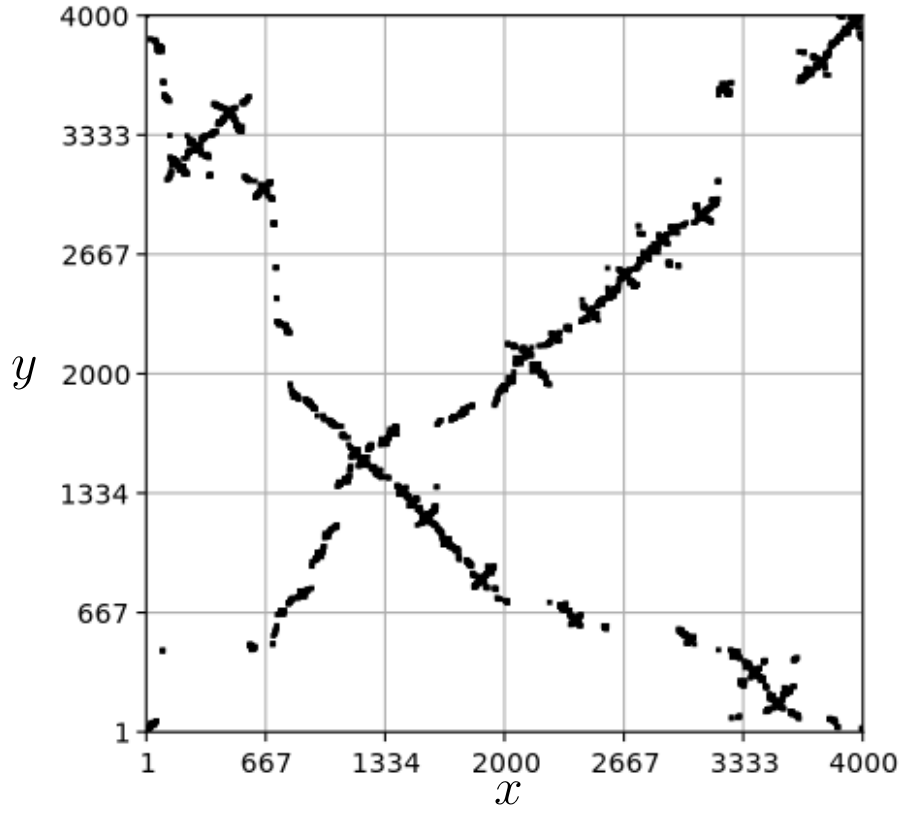} \minipdf{0.40}{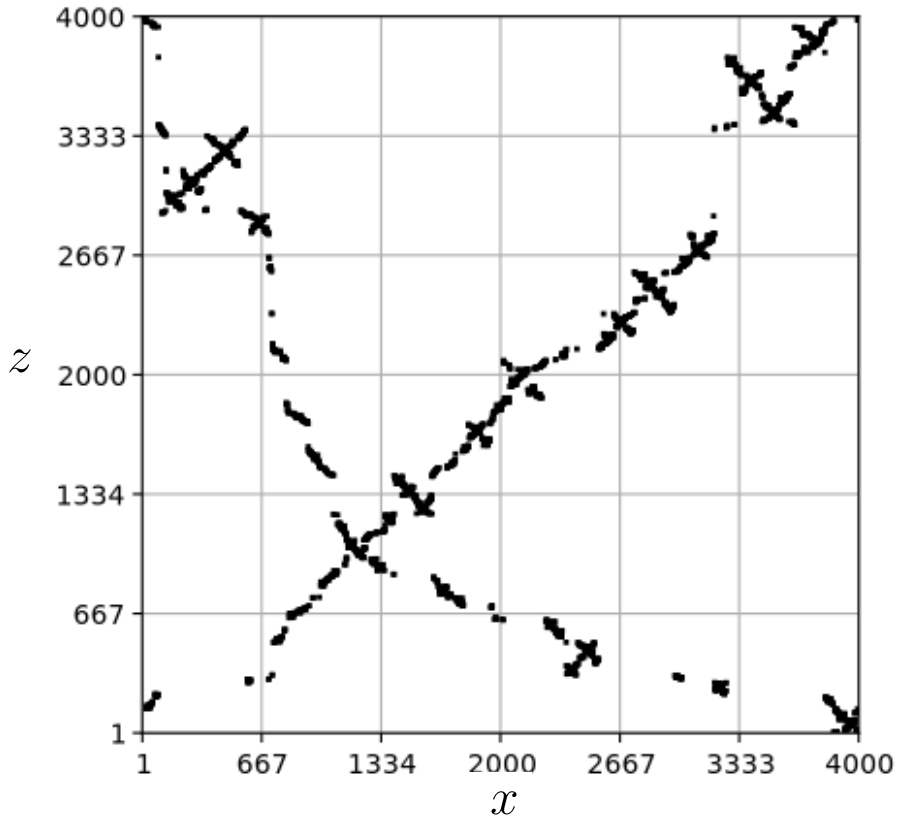} \minipdf{0.40}{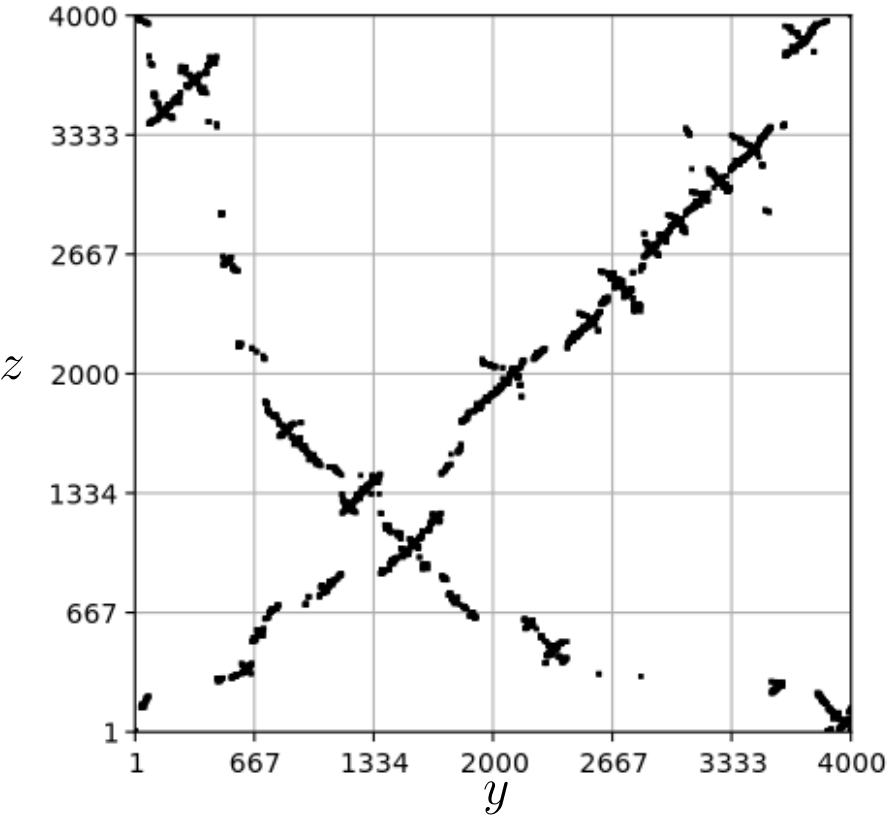}}
        \caption{A random $3-$permutations of size 4000 in $S^{2}_{n,{\bf C_{ex}}}$.}
        \label{fig:random_order}
    \end{figure}

\bibliographystyle{alpha}
\bibliography{sample}


\appendix

\section{Node configurations of Propositions \ref{prop:impliA} and \ref{prop:impliB}}

In this appendix, we illustrate and label the various possible configurations of internal nodes that appear in the proofs of Propositions \ref{prop:impliA} and \ref{prop:impliB}. 
Figure \ref{fig:illustr_rarb} shows the configurations involved in the proof of Proposition \ref{prop:impliA} and Figures \ref{fig:configs1} and \ref{fig:configs2} the ones of Proposition \ref{prop:impliB}. 

For each configuration, we depict only the smallest subtree containing the internal nodes involved. Consequently, the root of the displayed subtree does not necessarily coincide with the root of the entire tree. Moreover, when an internal node is represented by a triangular shape, this indicates that the relationship between this node and the node above it is an ancestor–descendant relation, rather than necessarily a direct parent–child relation. In this case, the label on the connecting arrow specifies the direction of the subtree of the ancestor in which the descendant lies.

\begin{figure}[!htb]
        \center{1.a)\;  \minipdf{0.25}{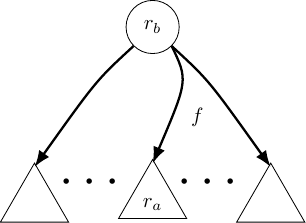} 1.b) \; \minipdf{0.25}{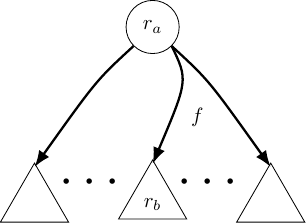}  2) \; \minipdf{0.28}{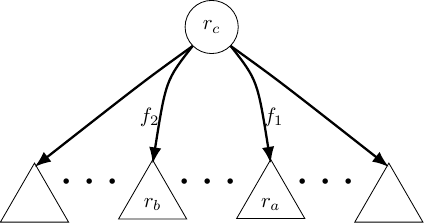}}
        \caption{Possible configurations of two internal nodes.}
        \label{fig:illustr_rarb}
    \end{figure}

\begin{figure}[!htb]
        \center{1.a)\minipdf{0.25}{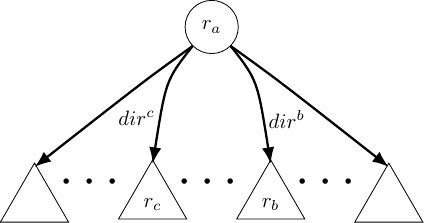} 1.b)\minipdf{0.25}{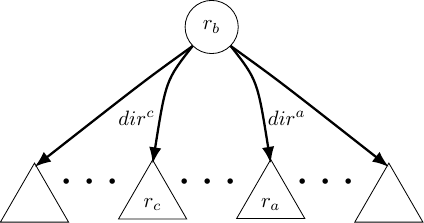}  1.c)\minipdf{0.25}{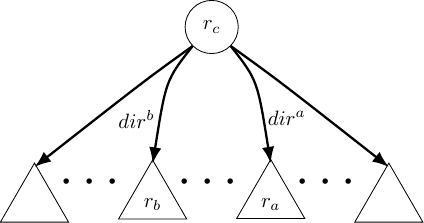} \newline 2.ab)\minipdf{0.23}{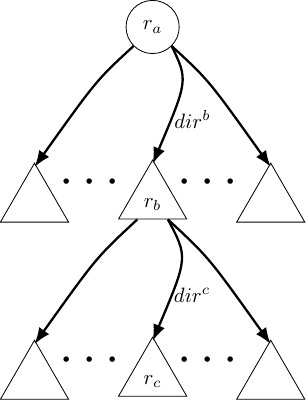}   2.ac)\minipdf{0.23}{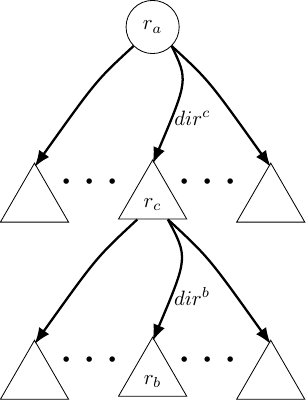} 2.ba) \minipdf{0.23}{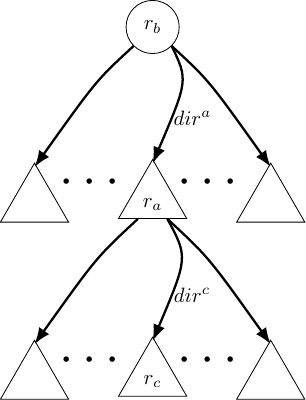} \newline2.bc)\minipdf{0.23}{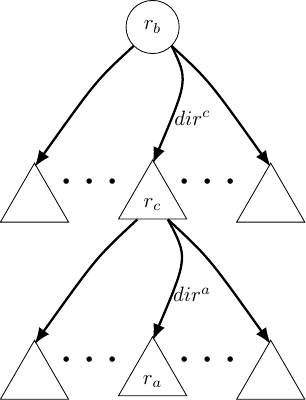}  2.ca)\minipdf{0.23}{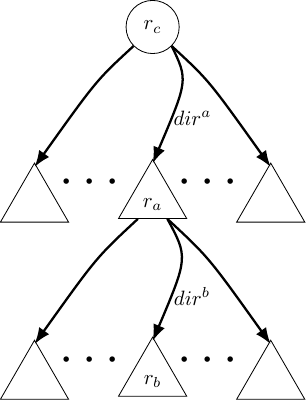}  2.cb)\minipdf{0.23}{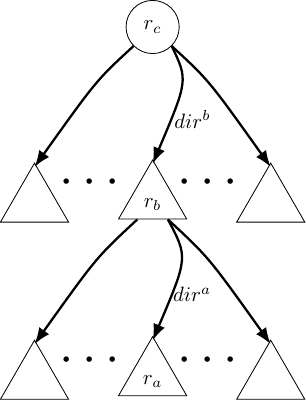}}
        \caption{Possible configurations of three internal nodes (1st part).}
        \label{fig:configs1}
    \end{figure}

\begin{figure}[!htb]
        \center{3.a)\minipdf{0.25}{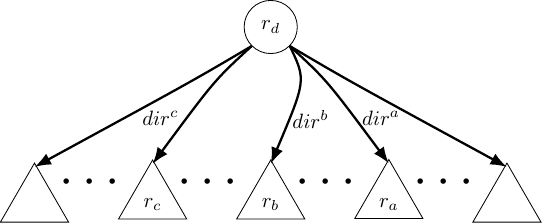}    4.ab)\minipdf{0.22}{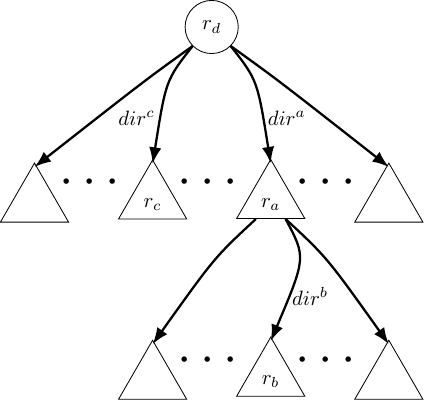}   4.ac)\minipdf{0.22}{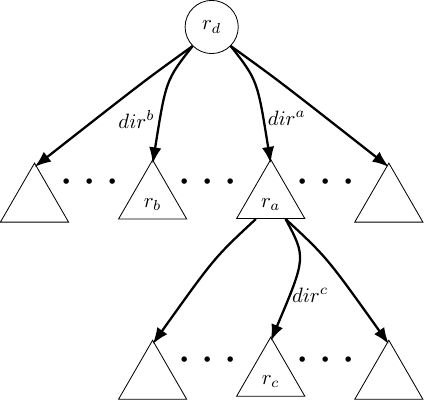} \newline 4.ba) \minipdf{0.25}{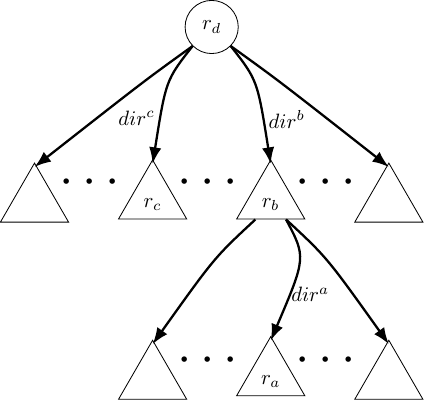} 4.bc)\minipdf{0.22}{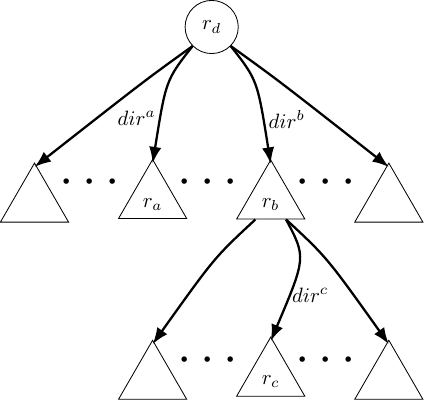} 4.ca)\minipdf{0.22}{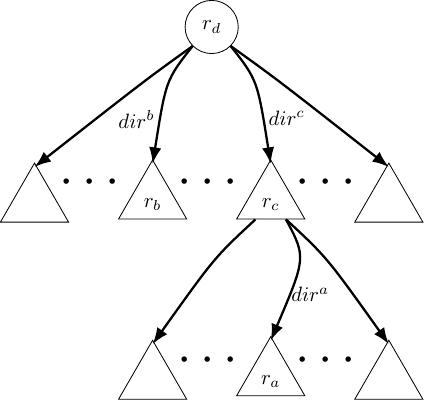} 4.cb)\minipdf{0.22}{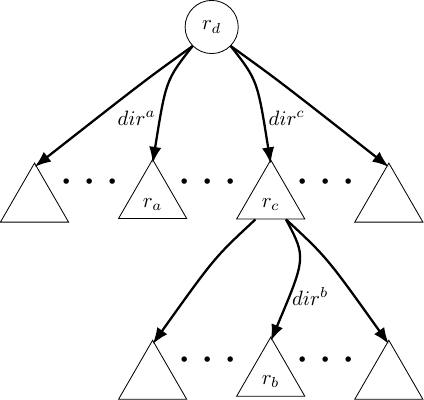}}
        \caption{Possible configurations of three internal nodes (2nd part).}
        \label{fig:configs2}
    \end{figure}

\FloatBarrier

\section{From trees to $d-$permutations}
\label{sec:tree2perm}

\begin{definition}
\label{def:ordertree}
Let $r_A$ and $r_B$ be two internal nodes in a
$2^{d-1}$-ary tree $\cT \in \mathbb{T}^{2^{d-1}}_{n}$ and let $C$
be a total order of the elements of $\mathbb{F}^d$ that is compatible with
respect to the axis $i$.

We say that $r_A$ precedes $r_B$ with respect to $C$ and the axis $i$,
denoted $r_A <_{C,i} r_B$, if:

\begin{itemize}
    \item $r_A=r_B$;
    \item $r_A\in \cT_f(r_B)$ and $f_i=-$;
    \item $r_B\in \cT_f(r_A)$ and $f_i=+$;
    \item $r_A$ is not an ancestor of $r_B$ (nor the converse), and for
    their first common ancestor $r_C$ one has $r_A\in \cT_{f_1}(r_C)$, $r_B\in \cT_{f_2}(r_C)$, where $f_1$ precedes $f_2$ in $C_i$.
\end{itemize}
\end{definition}

The relation $<_{C,i}$ is a total order of the internal nodes of a tree. It can thus be used to define a permutation.
\begin{definition}
\label{def:tree2perm}
Let $\cT$ be a $2^{d-1}$-ary tree in
$\mathbb{T}^{2^{d-1}}_{n}$ and let
${\bf C}=(C_0,\ldots,C_{d-1})$ be a compatible set of total orders
on $\mathbb{F}^d$. For $i=0,\ldots,d-1$, let $\sigma_i$ be the list of internal nodes
ordered according to the relation $<_{C_i,i}$. The \emph{ordered $d$-permutation of $\cT$ with respect to ${\bf C}$} is the
$d$-permutation
\[
\pi_{\bf C}(T)
=
(\pi_{1,{\bf C}},\ldots,\pi_{d-1,{\bf C}})
\]
defined such that $\pi_{j,{\bf C}}(i)$ is the position in $\sigma_{j}$ of
the node whose position is $i$ in $\sigma_0$.
\end{definition}


In the ordered $d$-permutations associated
with a tree $\cT$, each point corresponds to an internal node of $\cT$.

Given a tree
$\cT\in \mathbb{T}^{2^{d-1}}_{n}$
and a compatible total order $C$
with respect to the axis $l$
on the elements of $\mathbb{F}^d$,
one can obtain the list of internal nodes of $\cT$
ordered according to the relation $<_{C,l}$
by using Algorithm \ref{alg:Tree2perm}. Before explaining in more detail this algorithm,
let us use a few notations:

\begin{itemize}
\item Given a tree $\cT$ we denote by $\mathrm{root}(\cT)$ the root of this tree.
\vspace{-0.5 em}
\item Given an internal node $r$ in a tree,
we denote by $\mathrm{par}(r)$ its parent and by
$\mathrm{child}_{\bf dir}(r)$
its child with associated direction ${\bf dir}$.
\vspace{-0.5 em}
\item Given a total order $C$ on the elements of $\mathbb{F}^d$,
we denote by $C(j)$ the direction that has position $j$
in this total order.
\vspace{-0.5 em}
\item An internal node is said to be maximal if all its children are leaves.

\end{itemize}

Algorithm \ref{alg:Tree2perm} performs a peeling of $\cT$
where the internal nodes whose directions have the lowest
position in $C$ are removed first. This algorithm is composed of three main phases:
descending, removing nodes, and backtracking. These three phases are repeated until all internal nodes of the tree
have been deleted.

\begin{itemize}

\item \textbf{Descending phase:}
Starting from the root of the current subtree,
the algorithm recursively moves to the child whose direction
has the smallest position in the total order $C$,
as long as this child is an internal node.
During this descent, non-maximal internal nodes are added
to the output list $\pi_l$ when the algorithm enters for the
first time one of their subtrees whose associated direction
has a positive $l$-th coordinate.
The descent stops when a maximal internal node is reached.
This phase is given by the subroutine described in Algorithm \ref{alg:Descent}.

\item \textbf{Node removal phase:}
Once a maximal internal node is reached,
it is removed by replacing its whole subtree with a single leaf.
During this step, the corresponding internal node is added to
$\pi_l$ if it was not already present.

\item \textbf{Backtracking phase:}
After a removal, the algorithm backtracks to the parent
of the last deleted node.
If this parent has now become maximal in the remaining tree,
it is removed and added to $\pi_l$.
Otherwise, the algorithm resumes the descending phase.

\end{itemize}

\begin{prop}
Let $\cT$ be a $2^{d-1}$ ary tree in $\mathbb{T}^{2^{d-1}}_{n}$, and let $C$ be a total order of the elements of $\mathbb{F}^d$ that is compatible with respect to the axis $l$. The output of algorithm \ref{alg:Tree2perm} with input parameter $\cT$, $C$ and axis $l$ is the list of internal nodes of $\cT$ ordered according to $<_{C,l}$.
\end{prop}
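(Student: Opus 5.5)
We argue by strong induction on the number of internal nodes of $\cT$, using the recursive structure of the algorithm. The first step is a recursive description of $<_{C,l}$ itself. Let $r$ be the root of $\cT$ and list the directions of $\mathbb{F}^d$ as $C(1),\ldots,C(2^{d-1})$. Because $C$ is compatible with respect to the axis $l$, these split into an initial block of directions with $l$-th coordinate $-$, followed by a block with $l$-th coordinate $+$.

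Unwinding Definition~\ref{def:ordertree} then gives the following. The list of internal nodes sorted by $<_{C,l}$ is the concatenation, in order, of three pieces:
\begin{itemize}
\item the sorted lists of $\cT_{C(1)}(r),\ldots,\cT_{C(m)}(r)$ for the negative directions;
\item the root $r$;
\item the sorted lists of $\cT_{C(m+1)}(r),\ldots,\cT_{C(2^{d-1})}(r)$ for the positive directions.
\end{itemize}
Here the sorted list of each subtree uses the restriction of $<_{C,l}$ to that subtree. This restriction coincides with the relation defined intrinsically on the subtree, since first common ancestors and subtree memberships are unchanged.

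We check this description case by case against Definition~\ref{def:ordertree}. A node in $\cT_f(r)$ is compared with $r$ via the second or third bullet, according to the sign of $f_l$. Two nodes in different subtrees of $r$ have $r$ as their first common ancestor, so they are compared via the fourth bullet using the order $C$. Two nodes in the same subtree are compared inside that subtree. As a byproduct, this also shows that $<_{C,l}$ is a total order.

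The second step shows that Algorithm~\ref{alg:Tree2perm} run on $\cT$ produces exactly this concatenation. The plan is to view the execution as a depth-first traversal of $\cT$ in which the children of each node are visited in the order of $C$, skipping leaves. We prove the following invariant, by induction on the size of the subtree $\cT(r)$ for an arbitrary internal node $r$:
\begin{quote}
once the algorithm first enters $r$, it outputs exactly the internal nodes of $\cT(r)$ before it ever backtracks to $\mathrm{par}(r)$. It outputs them in the order sorted list of the subtrees of negative directions, then $r$, then sorted list of the subtrees of positive directions.
\end{quote}
The base case is a maximal node. It is removed and output immediately, which matches the description since all its children are leaves.

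For the inductive step, the descent from $r$ goes to the first internal child in the order of $C$, and by induction that child's subtree is fully emitted and collapsed into a leaf before control returns to $r$. Resuming the descent at $r$ then selects the next internal child in the order of $C$. The node $r$ is appended either at the first moment the algorithm enters a subtree of $r$ with positive $l$-th coordinate, or, if no such internal subtree exists, when $r$ becomes maximal and is removed. In both cases $r$ is appended after all negative-direction subtrees have been emitted and before any positive-direction subtree. Compatibility of $C$ is what guarantees this, since all negative directions are visited before all positive ones. The third and last step combines the first two: applying the invariant at the root gives the recursive concatenation, which is the $<_{C,l}$-sorted list.

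The main obstacle is the bookkeeping in the invariant of the second step. The algorithm's prose description is operational, and it is stated in terms of peeling the tree, descending and backtracking, rather than recursion. So we must verify three things:
\begin{itemize}
\item \emph{No duplicate output.} The rule ``added if not already present'' makes each node appear exactly once.
\item \emph{Correct resumption.} After a subtree has been collapsed into a leaf, the descent restarts at the right node. It must pick the next internal child, not revisit the removed one, and it must not jump back to the global root in a way that emits nodes out of order.
\item \emph{Correct output time for the root.} This concerns the edge case where a positive-direction child is a leaf. Then $r$ must still be output at the right time, namely upon its removal, after the negative subtrees.
\end{itemize}
If the algorithm actually restarts each descent from the global root, we will handle this by observing that already collapsed subtrees are leaves. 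The descent from the root therefore retraces the current path to the first node that still has unvisited internal children. The emission rule ``first time entering a positive subtree'' guarantees that ancestors on that path are not output twice.
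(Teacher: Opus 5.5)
Your argument is correct, but it is organized differently from the paper's. The paper argues pairwise. It fixes $r_a <_{C,l} r_b$ and goes through the three non-trivial bullets of Definition~\ref{def:ordertree}: descendant in a negative-direction subtree, descendant in a positive-direction subtree, and incomparable nodes separated at their first common ancestor. In each case it argues directly from the behaviour of Algorithm~\ref{alg:Tree2perm} that $r_a$ is written before $r_b$. You instead first turn Definition~\ref{def:ordertree} into a recursive description of the sorted list: negative-direction subtrees in $C$-order, then the root, then positive-direction subtrees. You then show by induction on subtree size that the algorithm emits exactly this concatenation.

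Your route gives more. It proves that $<_{C,l}$ is a total order, which the paper only asserts. Your invariant also shows that every internal node is output exactly once. The pairwise argument tacitly needs this to conclude that an output consistent with all comparisons \emph{is} the sorted list. Your route also makes visible where compatibility of $C$ is used. In the paper's first case it is used silently, to rule out $r_b$ having been emitted earlier on entering a positive subtree. The paper's route is shorter and mirrors the case structure of the definition, but it leaves the algorithm's control flow essentially unexamined.

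Two bookkeeping points for the write-up:
\begin{itemize}
\item In the actual pseudocode, after a subtree is collapsed the algorithm neither restarts a descent at $r$ nor restarts from the global root. It increments $a$ and moves to $\mathrm{child}_{C(a)}(\mathrm{par}(act))$, going up to the parent only once $a$ exceeds $2^{d-1}-1$, and $\mathrm{Descent}$ is always called on $\cT(act)$. Your final contingency is therefore unnecessary, and the ``correct resumption'' step should be checked against this sibling iteration.
\item When $\mathrm{dir}_l(r)=+$, the node $\mathrm{par}(r)$ is emitted just \emph{after} $act$ is set to $r$. You should define ``entering $r$'' as the moment after that check; otherwise the emission of $\mathrm{par}(r)$ can fall inside the window where your invariant claims only nodes of $\cT(r)$ are output.
\end{itemize}
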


\begin{proof}
Let $r_a$ and $r_b$ be two internal nodes in $\cT$ and let
$\pi_l(r_a)$ and $\pi_l(r_b)$ be the position of $r_a$
and $r_b$ in $\pi_l$.
We prove that in each configuration of $r_a$ and $r_b$
such that $r_a <_{C,l} r_b$, one has
$\pi_l(r_a)<\pi_l(r_b)$.

If $r_a\in T_{\bf f}(r_b)$ and ${\bf f}_i=-$, during the course of
Algorithm \ref{alg:Tree2perm}, $r_b$ is not added to $\pi_l$ when the
algorithm enters its subtree containing $r_a$.
Thus $r_a$ is removed from $\cT$ before $r_b$ is added to
$\pi_l$ and one has
$\pi_l(r_a)<\pi_l(r_b)$.

If $r_b\in T_{\bf f}(r_a)$ and ${\bf f}_i=+$ one that $r_a$
is added to $\pi_l$ when Algorithm \ref{alg:Tree2perm} enters its subtree
containing $r_b$.
One thus also has
$\pi_l(r_a)<\pi_l(r_b)$.

If $r_a$ is not an ancestor of $r_b$ (or the inverse)
and for their first common ancestor $r_c$ one has
$r_a\in \cT_{\bf f_1}(r_c)$ and
$r_b\in \cT_{\bf f_2}(r_c)$
with ${\bf f_1}$ that precedes ${\bf f_2}$ in $C_i$,
Algorithm \ref{alg:Tree2perm} enters first the subtree of $r_c$
that contains $r_a$.
Thus $r_a$ is removed before Algorithm \ref{alg:Tree2perm} enters
the subtree of $r_c$ that contains $r_b$ and one has
$\pi_l(r_a)<\pi_l(r_b)$.
\end{proof}

\begin{rem}
\label{rem:ancestors}
Each point of a $d$-permutation $\bpi$ corresponds to an internal
node in its max-tree $\cT_{\max}(\bpi)$. Let $p_a$ and $p_b$ be two points of $\bpi$, and let
$r_a$ and $r_b$ be their corresponding internal nodes in
$\cT_{\max}(\bpi)$.
Let also $r_{c_1},\ldots,r_{c_m}$
be the ancestors of $r_b$ in $\cT_{\max}(\bpi)$, and let $p_{c_1},\ldots,p_{c_m}$ be their corresponding points in $\bpi$.

Then $r_a \in \cT_{\bf f}(r_b)$, ${\bf f} \in \mathbb{F}^d,$
if and only if ${\bf dir}(p_b,p_a)={\bf f}$,
and, for every $i=1,\ldots,m$, ${\bf dir}(p_{c_i},p_a)
= {\bf dir}(p_{c_i},p_b)$.
\end{rem}

\begin{algorithm}
\caption{Descent \\ {\bf Input:} A $2^{d-1}$-ary tree $\cT\in \mathbb{T}^{2^{d-1}}_{n}$,
an axis $l$,
a compatible total order $C$ with respect to $l$ on the elements of $\mathbb{F}^d$,
an integer $m$ and an ordered list of internal nodes $\pi_l$. \\
{\bf Output:} Two integers $m'$ and $a$, an internal node $act$,
an ordered list of internal nodes $\pi'_l$.}
\label{alg:Descent}
\begin{algorithmic}[1]

\State Set the current node $act$ to the root of $T$
\State $j \gets 0$ and $a \gets 0$

\While{$act$ is not a maximal internal node of $\cT$}

    \If{$act \neq \mathrm{root}(\cT)$ and ${\bf dir}_l(act)=+$ and
        $\mathrm{par}(act)\notin \pi_l$}
        \State Add $\mathrm{par}(act)$ to $\pi_l$ at position $m$
        \State $m \gets m+1$
    \EndIf

    \If{$\mathrm{child}_{C(j)}(act)$ is not a leaf}
        \State $act \gets \mathrm{child}_{C(j)}(act)$
        \State $a \gets j$
        \State $j \gets 0$
    \Else
        \State $j \gets j+1$
    \EndIf

\EndWhile
 \If{$act \neq \mathrm{root}(\cT)$ and ${\bf dir}_l(act)=+$ and
        $\mathrm{par}(act)\notin \pi_l$}
        \State Add $\mathrm{par}(act)$ to $\pi_l$ at position $m$
        \State $m \gets m+1$
    \EndIf
    
\State \Return $m,a,act,\pi_l$
\end{algorithmic}
\end{algorithm}

\begin{algorithm}
\caption{Tree2perm \\ {\bf Input: } A $2^{d-1}$-ary tree $\cT$ with $n$ internal nodes in
$\mathbb{T}^{2^{d-1}}_{n}$, an axis $l$ and a total order $C$ of the directions in $\mathbb{F}^d$ that is compatible with respect to $l$   \\ {\bf Output: } A permutation $\pi_l$}
\label{alg:Tree2perm}
\begin{algorithmic}[1]

\State $m\gets 1$, $\pi_l\gets \emptyset$, $a\gets 0$
\State Set $act$ to the root of $\cT$

\While{$m<n+1$}

    \If{$act$ is an internal node}

        \If{$act\neq \mathrm{root}(\cT)$ and $\mathrm{dir}_l(act)=+$ and
            $\mathrm{par}(act)\notin \pi_l$}
            \State Add $\mathrm{par}(act)$ to $\pi_l$ at position $m$
            \State $m\gets m+1$
        \EndIf

        \If{$act$ is not maximal}
            \State $(m,a,act,\pi_l)\gets
            \mathrm{Descent}(\cT(act),l,C,m,\pi_l)$
        \EndIf

        \If{$act\notin \pi_l$}
            \State Add $act$ to $\pi_l$ at position $m$
            \State $m\gets m+1$
        \EndIf

        \State In $\cT$, replace the subtree $\cT(act)$ by a single leaf

    \EndIf

    \State $a\gets a+1$

    \If{$a>2^{d-1}-1$}
        \State $act\gets \mathrm{par}(act)$
        \State $a\gets$ index $k$ such that $C_l(k)=\mathrm{dir}(act)$
    \Else
        \State $act\gets \mathrm{child}_{C(a)}(\mathrm{par}(act))$
    \EndIf

\EndWhile

\State \Return $\pi_l$
\end{algorithmic}
\end{algorithm}

\begin{rem}
\label{rem:treedir}
Let $\cT$ be a $2^{d-1}$-ary tree in $T_{n}^{2^{d-1}}$, and let ${\bf C}=(C_0,\ldots,C_{d-1})$
be a compatible set of total orders on $\mathbb{F}^d$.
Let $r_1$ and $r_2$ be two internal nodes of $\cT$, and let
$p_1$ and $p_2$ be their corresponding points in
$\pi_{\bf C}(\cT)$. If there exists a direction ${\bf f}\in \mathbb{F}^d$ such that $r_1\in T_{\bf f}(r_2)$, then ${\bf dir}(p_2,p_1)={\bf f}$.
\end{rem}

\begin{lem}
Let $\cT$ be a $2^{d-1}$-ary tree in $\mathbb{T}^{2^{d-1}}_{n}$, and let
${\bf C}=(C_0,\ldots,C_{d-1})$ be a compatible set of total orders on
$\mathbb{F}^d$. The ordered $d$-permutation of $\cT$ with respect to $C$
satisfies $\gamma^d(\pi_C(\cT)) = \cT$.
\end{lem}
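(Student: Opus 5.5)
We need to prove that $\gamma^d(\pi_{\bf C}(\cT))=\cT$. The plan is a strong induction on the number $n$ of internal nodes, following the recursive definition of the max-tree (Definition \ref{def:perm2tree}). The cases $n=0,1$ are immediate.

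\textbf{Step 1: the root of $\cT$ is the maximal point.} For $n\ge 2$, let $r$ be the root of $\cT$ and $p_r$ its point in $\bpi:=\pi_{\bf C}(\cT)$. Every other node $r'$ lies in some $\cT_{\bf f}(r)$ with ${\bf f}\in\mathbb{F}^d$, so ${\bf f}_{d-1}=-$. By the second clause of Definition \ref{def:ordertree}, $r'<_{C_{d-1},d-1} r$. Hence $r$ is last in $\sigma_{d-1}$, so $\pi_{d-1}(p_r)=n$ and $p_r=p_{max,d-1}$.

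\textbf{Step 2: the direction to the root.} By Remark \ref{rem:treedir}, every node $r'\in\cT_{\bf f}(r)$ satisfies ${\bf dir}(p_r,p_{r'})={\bf f}$. Concretely, on each axis $i$ the relation $<_{C_i,i}$ places $r'$ below or above $r$ according to ${\bf f}_i$. It follows that $\bpi^{\bf f}$ is exactly the set of points coming from the nodes of $\cT_{\bf f}(r)$.

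\textbf{Step 3: apply induction to the subtrees.} We must show that the subpermutation $\bpi^{\bf f}$, once standardized, equals $\pi_{\bf C}(\cT_{\bf f}(r))$; the induction hypothesis then gives $\gamma^d(\bpi^{\bf f})=\cT_{\bf f}(r)$. The definition of $\gamma^d$ then yields $\gamma^d(\bpi)=\cT$. The required identity amounts to checking that, for two nodes $r_a,r_b$ of $\cT_{\bf f}(r)$, the relative order $r_a<_{C_i,i}r_b$ computed in $\cT$ coincides with the one computed in the subtree $\cT_{\bf f}(r)$.

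This holds because every clause of Definition \ref{def:ordertree} depends only on ancestor relations, the directions of the subtrees containing the two nodes, and their first common ancestor. All of these lie inside $\cT_{\bf f}(r)$, since that common ancestor is a descendant of $r$. Moreover, standardizing preserves relative order, and the $x_0$-reindexing in Definition \ref{def:tree2perm} is compatible with this restriction.

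\textbf{Main obstacle.} The subtle points are that $<_{C,i}$ is a genuine total order, which the paper asserts, and the precise restriction argument in Step 3. The latter means confirming that the first common ancestor of $r_a$ and $r_b$ is unchanged when passing to the subtree $\cT_{\bf f}(r)$, which I expect to be routine.
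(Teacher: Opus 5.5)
Your proof is correct, and it follows a different route from the paper's. The paper fixes the whole tree and inducts on the depth of a node $r_2$. It shows that whenever $r_1$ is the ${\bf f}$-child of $r_2$ in $\cT$, the same edge appears in $\gamma^d(\pi_{\bf C}(\cT))$. To do this it relies on Remark \ref{rem:ancestors}, which characterizes membership in a subtree of the max-tree by agreement of directions with all ancestors. It then checks three things: $r_1'$ lies in the ${\bf f}$-subtree of $r_2'$; that subtree contains exactly the images of $\cT_{\bf f}(r_2)$; and $p_1$ is maximal among them on the last axis. You instead use strong induction on the number of internal nodes, mirroring the recursion in Definition \ref{def:perm2tree}. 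The root is last on axis $d-1$, and the blocks $\bpi^{\bf f}$ are exactly the images of the subtrees $\cT_{\bf f}(r)$. Your key extra ingredient is self-similarity: the standardization of $\bpi^{\bf f}$ equals $\pi_{\bf C}(\cT_{\bf f}(r))$, because $<_{C_i,i}$ between two nodes of a subtree depends only on data inside that subtree. The totality worry you flag is also easy to settle. $<_{C_i,i}$ is just the traversal that lists, at each node, the subtrees with $-$ on axis $i$ in $C_i$-order, then the node itself, then the subtrees with $+$ on axis $i$. Compatibility is exactly what makes these clauses consistent. Your route is more self-contained: it never needs Remark \ref{rem:ancestors}, which the paper states without proof, and it derives Remark \ref{rem:treedir} directly from clauses 2 and 3 of Definition \ref{def:ordertree}. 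The paper's route avoids all standardization bookkeeping and gives an explicit node-by-node identification inside a single permutation. The price is tracking ancestor chains and proving both inclusions in its statement (2).
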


\begin{proof}
By Definition~\ref{def:tree2perm} and Remark~\ref{rem:ancestors}, each internal node of $\cT$
corresponds to a point of $\pi_C(\cT)$ and to an internal node of
$\gamma^d(\pi_C(\cT))$.

Let $r_1$ and $r_2$ be two internal nodes of $\cT$, and let
$r_1'$ and $r_2'$ denote their corresponding internal nodes in
$\gamma^d(\pi_C(T))$. We prove that $\gamma^d(\pi_C(T)) = \cT$ by induction on the depth of $r_2$. More precisely, we show that
for every pair of internal nodes $r_1,r_2$ in $\cT$, if $r_1$ is the
child of direction ${\bf f}$ of $r_2$, then $r_1'$ is also the child of
direction ${\bf f}$ of $r_2'$ in $\gamma^d(\pi_C(\cT))$.

\medskip

If $r_2$ is the root of $\cT$ (depth $0$).
The corresponding point $p_2$ is then the maximal point of
$\pi_C(\cT)$ with respect to axis $k$. Moreover, for every internal node $r_n$ of $\cT$, there exists a
direction ${\bf f_n}\in \mathbb{F}^d$ such that $r_n \in T_{\bf f_n}(r_2)$.
By Remark~\ref{rem:treedir}, the points $p'$ satisfying $\operatorname{dir}(p_2,p')={\bf f}$ are exactly those associated with internal nodes lying in the
subtree of direction ${\bf f}$ of $r_2$. Since $r_1$ is the child of direction ${\bf f}$ of $r_2$, every internal
node $r_3\neq r_1$ in the subtree of direction ${\bf f}$ of $r_2$
belongs to a subtree of direction ${\bf f_3}$ of $r_1$.
Because ${\bf f_3} \in \mathbb{F}^d$, Remark~\ref{rem:treedir} implies that
$\pi_{C,k}(p_3) < \pi_{C,k}(p_1)$. Furthermore, $\operatorname{dir}(p_2,p_1)={\bf f}$, hence $p_1$ is the maximal point of the subpermutation
$\pi_C^{\bf f}$ with respect to axis $k$.
Therefore, in $\gamma^d(\pi_C(T))$, the node $r_1'$
is the child of direction $f$ of $r_2'$.

\medskip

Assume now that $r_2$ is an internal node of depth $w>0$. Let $r_{c,1},\ldots,r_{c,w}$
be the ancestors of $r_2$ in $\cT$. We suppose by induction that $r_{c,1}',\ldots,r_{c,w}'$ the associated nodes of $r_{c,1}',\ldots,r_{c,w}'$ are the ancestors of $r_2'$ in $\gamma^d(\pi_C(T))$.


We prove the following statements:

\begin{enumerate}
\item In $\gamma^d(\bpi_C(T))$, the node $r_1'$ lies in the
subtree of direction ${\bf f}$ of $r_2'$.
\vspace{-0.5 em}
\item The internal nodes lying in the subtree of direction ${\bf f}$
of $r_2'$ correspond exactly to internal nodes of $\cT$
lying in the subtree of direction $f$ of $r_2$.
\vspace{-0.5 em}
\item For every internal node $r$ in the subtree of direction ${\bf f}$
of $r_2$, the associated point $p$ satisfies $\pi_{C,k}(p) < \pi_{C,k}(p_1)$.
\end{enumerate}

Together, (1) and (2) show that the points corresponding to nodes of the subtree of direction ${\bf f}$ of $r_2'$ are exactly those associated with nodes of the subtree of direction ${\bf f}$ of $r_2$. Statement (3) then implies that among all these points, $p_1$
has the greatest last coordinate.
Consequently, $r_1'$ must be the child of direction $f$
of $r_2'$.

\medskip

\textbf{Proof of (1).}
By Remark~\ref{rem:treedir}, $\operatorname{dir}(p_2,p_1)={\bf f}$.
Since $r_1\in \cT_{\bf f}(r_2)$, for every ancestor $r_{c,i}$ of $r_2$
such that $r_2\in \cT_{f_{c,i}}(r_{c,i})$,
one also has $r_1\in \cT_{f_{c,i}}(r_{c,i})$. Let $p_{c,i}$ be the point associated with $r_{c,i}$.
Then $\operatorname{dir}(p_{c,i},p_1)
=
\operatorname{dir}(p_{c,i},p_2)$. By the induction hypothesis,
$r_{c,i}'$ is an ancestor of $r_2'$ in
$\gamma^d(\bpi_{\bf C}(T))$.
Remark~\ref{rem:ancestors} therefore implies that $r_1' \in \gamma^d(\bpi_{\bf C}(T))_{\bf f}(r_2')$.

\medskip

\textbf{Proof of (2).}
In $\cT$, let $r_3$ be an internal node that is not a descendant of $r_2$,
and let $r_q$ be the first common ancestor of $r_2$ and $r_3$. Then $r_3\in T_{\bf f_3}(r_q)$, $r_2\in T_{\bf f_2}(r_q)$ and $f_2\neq f_3$.

Let $p_3$ and $p_q$ be the corresponding points in $\bpi_{\bf C}(\cT)$. By Remark~\ref{rem:treedir}, one has $\operatorname{dir}(p_q,p_3)
\neq
\operatorname{dir}(p_q,p_2)$. By induction hypothesis, the point $p_q$ corresponds to an ancestor of $r_2'$ in $\gamma^d(\bpi_{\bf C}(T))$, this implies by Remark \ref{rem:ancestors} that the internal node associated to  $r_3$ in $\gamma^d(\bpi_{\bf C}(T))$ is not a descendant of $r_2'$ in $\gamma^d(\bpi_{\bf C}(T))$.

Moreover, for any internal node $r_4$ such that $r_4 \in \cT_{\bf f_4}(r_2)$ in $\cT$, one has by Remark \ref{rem:treedir}, that ${\bf dir}(p_2,p_4) = {\bf f_4} \neq {\bf f_2}$ where $p_4$ is the point in $\bpi_{\bf C}(T)$ associated to $r_4$. By Remark \ref{rem:ancestors} one thus has that the internal nodes associated to $r_4$ in $\gamma^d(\bpi_{\bf C}(T))$, do not lie in the subtree of direction {\bf f } of $r_2'$.

\textbf{Proof of (3).}
In $\cT$, since $r_1$ is the child of direction ${\bf f}$ of $r_2$,
every internal node $r$ in the subtree of direction ${\bf f}$
of $r_2$, different from $r_1$, belongs to a subtree of
direction ${\bf f_r}$ of $r_1$. Let $p_r$ be the corresponding point in $\bpi_{\bf C}(T)$ of such internal node.  Since $f_r\in \mathbb{F}^d$, Remark~\ref{rem:treedir} yields $\pi_{C,k}(p_r) < \pi_{C,k}(p_1)$.
\end{proof}

Finally, it can be seen by Definitions \ref{def:ordertree}, \ref{def:tree2perm} and \ref{def:perm2tree} that for any tree $\cT\in \mathbb{T}_{n}^{2^{d-1}}$ and for any compatible set of total orders {\bf C}, the ordered $d-$permutation of $\cT$ with respect to {\bf C} is admissible with respect to ${\bf C}$.

\end{document}